\theoremstyle{plain}
\newtheorem*{theorem*}{Main Theorem}
\newtheorem{proposition}{Proposition}[section]
\theoremstyle{definition}
\newtheorem{definition}[proposition]{Definition}
\newtheorem{example}[proposition]{Example}
\newtheorem{remark}[proposition]{Remark}
\newlength{\standardunitlength}
\long\def\@makecaption#1#2{%
    \vskip 10pt
    \setbox\@tempboxa\hbox{
      \small\sf{\bfcaptionfont #1. }\ignorespaces #2}%
    \ifdim \wd\@tempboxa >\captionwidth {%
        \rightskip=\@captionmargin\leftskip=\@captionmargin
        \unhbox\@tempboxa\par}%
      \else
        \hbox to\hsize{\hfil\box\@tempboxa\hfil}%
    \fi}
\font\bfcaptionfont=cmssbx10 scaled \magstephalf
\newdimen\@captionmargin\@captionmargin=2\parindent
\newdimen\captionwidth\captionwidth=\hsize
\def\qed{{\hfill\text{$\Box$}}}
\newlength{\globalparindent}
\def\calA{{\mathcal A}}
\def\calC{{\mathcal C}}
\def\calO{{\mathcal O}}
\def\calP{{\mathcal P}}
\def\calSo{{\mathcal S}_o}
\def\calD{{\mathcal D}}
\def\calMko{{\mathcal M}_k^{(o)}}
\newcommand{\Cobo}{{\mathcal Cob}^3_o}
\newcommand{\Cobdl}{{\mathcal Cob}_{\bullet/l}}
\newcommand{\Kob}{\operatorname{Kob}}
\newcommand{\Kobo}{\operatorname{Kob}_o}
\newcommand{\Koboh}{{\operatorname{Kob}_{o/h}}}
\newcommand{\Kom}{\operatorname{Kom}}
\newcommand{\Komh}{\operatorname{Kom}_{/h}}
\newcommand{\Mat}{\operatorname{Mat}}
\renewcommand{\qed}{~\hfill$\square$}
\begin{document}
\newdimen\captionwidth\captionwidth=\hsize

\title{A binary operation on the class of coherently diagonal complexes}

\author{Hernando Burgos Soto}
\address{ George Brown College\\Toronto, ON, M1C 5J9\\Canada} \email{hburgos@georgebrown.ca}
\urladdr{http://individual.utoronto.ca/hernandoburgos/}

\date{
  First edition: March.{} 22, 2010.
  This edition: January 23, 2011.
}

\subjclass{57M25}
 \keywords{
  Cobordism,
  Coherently diagonal complex
  Degree-shifted rotation number,
  Delooping,
Gravity information,
  Khovanov homology,
  Diagonal complex,
  Planar algebra,
  Rotation number.
}


\begin{abstract}
  We use mathematical induction to prove that the horizontal composition in the class of coherently diagonal complexes is indeed a binary operation. That is to say, the embedding of two coherently diagonal complexes in an alternating planar diagram produces a coherently diagonal complex.
\end{abstract}


\maketitle

\tableofcontents

\section{Introduction} \label{sec:intro}

\indent In \cite{Bur}, we define the category $\Cobo$ whose objects are {\it oriented smoothing}, and whose morphisms are {\it oriented
cobordisms}.
This orientation in the smoothings was at that moment utilized in order 
to define a parameter belonging to $\frac{1}{2}\mathbb{Z}$ associated to the smoothing, and then generalize a Thistlethwaite result for the Jones polynomial stated in \cite{Thi}.\\
\indent If $\sigma$ denotes an oriented smoothing, the parameter associated to  $\sigma$, is called its {\it
rotation number} and is denoted by $R(\sigma)$. Specifically, for degree-shifted smoothings
$\sigma\{q\}$ we define $R(\sigma\{q\}): = R(\sigma) + q$. We further use this {\it
degree-shifted rotation number} to define a special class of chain
complexes in $\Kom(\Mat(\Cobo))$, of the form
\[\Omega:\qquad \cdots \longrightarrow \left[\sigma^r_j\right]_j
\longrightarrow \left[\sigma^{r+1}_j\right]_j \longrightarrow \cdots ,\]
which satisfies that for all degree-shifted smoothings $\sigma^r_j\{q\}$,  $2r-R(\sigma^r_j\{q\})$ is a
constant that we call {\it rotation constant} of the complex. In other words, twice the homological
degrees and the degree-shifted rotation numbers of the smoothings always lie along a
single diagonal.
We call this type of chain complexes {\it diagonal complexes}. Furthermore, a {\it coherently diagonal} complex is a diagonal complex whose partial closure
is also diagonal. Complexes of this type are the objects in our main theorem \\
\begin{theorem*}\label{Theorem:MainTheo1} Let $\Omega_1$ and $\Omega_2$ be coherently diagonal complexes, and let $D$ a binary operator of the alternating planar algebra. Then $D(\Omega_1,\Omega_1)$ is a diagonal complex
diagrams).
\end{theorem*}
 
 \indent The work is organized as follows. In section
\ref{sec:QuickReview}, we review the concept of bounded chain complex and present two additional tools for the proof of theorem \ref{Theorem:MainTheo1}. These tools are propositions \ref{lem:TriangulOfBlocks} and \ref{prop:GaussEliminitionInTriangular}. Section \ref{sec:Alternating} is devoted to introduce the category $\Cobo$ an give a quick review of some concepts related to
 alternating planar algebras. In particular we review the concepts of rotation number, alternating planar diagram, associated rotation number, and basic operators.\\
\indent Section \ref{sec:On-Diagonal}  introduces the concepts of diagonal complexes, coherently diagonal complexes, and their partial closures. We state here some results about the complexes obtained when a basic operator is applied to alternating elements, leading to the prove in section \ref{sec:Theorem1} of Theorem \ref{Theorem:MainTheo1}.\\
\section{Bounded chain complexes}\label{sec:QuickReview}

\indent For our purposes, it will be useful to recall here the concept of {\it bounded} chain complex. See \cite{Wei}. A chain complex
  \[\Omega:\qquad \xymatrix{  \cdots \Omega^r
\ar[r]^{d^r} & \Omega^{r+1}
  \cdots }\]
is called {\it bounded} if almost all the $\Omega^r$ are zero. If $\Omega^{r_o} \neq 0$, $\Omega^{r_M}\neq 0$ and $\Omega^{r}=0$ unless $r_0 \leq r \leq r_M$, we say that $(\Omega,d)$ has {\it amplitude} in $[r_0, r_M]$. \\

\indent It is well known that if
$(\Omega_i, d_i)\in\Kob(k_i)$ are complexes,  the complex
$(\Omega,d)=D(\Omega_1,\dots,\Omega_n)$ is defined by
\begin{equation} \label{eq:KobPA}
\begin{split}
    \Omega^r & :=
    \bigoplus_{r=r_1+\dots+r_n} D(\Omega_1^{r_1},\dots,\Omega_n^{r_n})
  \\
    \left.d\right|_{D(\Omega_1^{r_1},\dots,\Omega_n^{r_n})} & :=
    \sum_{i=1}^n (-1)^{\sum_{j<i}r_j}
      D(I_{\Omega_1^{r_1}},\dots,d_i,\dots,I_{\Omega_n^{r_n}}),
\end{split}
\end{equation}

 \begin{proposition}\label{lem:TriangulOfBlocks}Let $(\Psi,e)$ and $(\Phi,f)$ be chain complexes in $\Kob$; let $(\Phi,f)$ be a bounded complex in $[t_0, t_M]$ with $t_M-t_0=N-1$, and let $D$ be a 2-input planar arc diagram in which $D(\Psi,\Phi)$ makes sense. Then  $D(\Psi,\Phi)$ is homotopy equivalent to a chain complex $(\Omega,d)$ with the following properties:
\begin{enumerate}
\item Every vector $\Omega^r$ is of the form \[\Omega^r=
\bigoplus_{\begin{array}{c}
             t_0 \leq t \leq t_M \\
             s=r-t
           \end{array}
}D(\Psi^{s},\Phi^{t})\] can be regarded as a block column matrix $\left(
                                                                                       \begin{array}{c}
                                                                                         \Omega_1^r \\
                                                                                         \vdots \\
                                                                                         \Omega_{N}^r \\
                                                                                       \end{array}
                                                                                     \right)$ in which each block $\Omega_t^r$ is given by \[\Omega_t^r=D(\Psi^s,\Phi^t)\]
                                                                                     
 \item The differential matrices $d^r$ can be seen as ($N\times N$)-block  lower triangular matrices with blocks $d_{ij}^r:\Omega_j^r \longrightarrow \Omega_i^{r+1}$, $i,j \in {1,\ldots,N}$. That is to say, $d^r_{ij}$ is the summand in $d|_{D(\Psi, \Phi)}$ that has domain $\Omega_j^r$ and image  $\Omega_i^{r+1}$
\end{enumerate}
\end{proposition}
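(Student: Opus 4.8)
The plan is to unwind the planar-algebra definition of the composite complex given in equation (\ref{eq:KobPA}) and then read off both claimed properties directly from the numeration of $(\Phi,f)$. First I would set $(\Omega,d)=D(\Psi,\Phi)$ and recall that, by the first line of (\ref{eq:KobPA}), $\Omega^r=\bigoplus_{s+t=r}D(\Psi^s,\Phi^t)$, which already matches the displayed formula in property (1). The remaining work for (1) is to decompose each summand further: since $\Phi^t$ is a column vector whose entries are the smoothings $\phi_i$ lying in homological degree $t$, and since the planar operation $D(\Psi^s,-)$ is an additive functor and hence commutes with direct sums, we obtain $D(\Psi^s,\Phi^t)=\bigoplus_{\phi_i\in\Phi^t}D(\Psi^s,\phi_i)$. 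Collecting these over all $t$ in $[t_0,t_M]$ exhibits $\Omega^r$ as a column of blocks $\Omega_i^r=D(\Psi^{\,r-t(i)},\phi_i)$, where $t(i)$ denotes the homological degree of $\phi_i$ in $\Phi$ (the block being zero when $\Psi^{\,r-t(i)}=0$).

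Next I would arrange the blocks in the order dictated by the numeration $\phi_1,\dots,\phi_N$. Because the numeration lists the smoothings of $\Phi$ by increasing homological degree, and within a fixed degree in the order they occupy the column $\Phi^t$, this ordering agrees with the order in which the blocks are produced above, up to a permutation of rows within each homological slab; applying Lemma \ref{lem:rowsexchange} to realize the required permutation yields a homotopy equivalent (indeed isomorphic) complex with $\Omega^r=(\Omega_1^r,\dots,\Omega_N^r)^{\mathsf{T}}$, which establishes (1).

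For property (2), I would compute the block entries of $d^r$ from the second line of (\ref{eq:KobPA}). The differential restricted to $D(\Psi^s,\Phi^t)$ is a signed sum of $D(e,I_{\Phi^t})$ and $D(I_{\Psi^s},f)$. The first term keeps the $\Phi$-factor fixed, so it contributes only to the diagonal blocks $d_{ii}^r:\Omega_i^r\to\Omega_i^{r+1}$. The second term acts through $f$, whose $(i,j)$ component $f_{ij}:\phi_j\to\phi_i$ is, by the numeration property recalled just before the proposition, the zero cobordism whenever $i\le j$. Hence each off-diagonal block equals $d_{ij}^r=\pm D(I_{\Psi^s},f_{ij})$ and vanishes for $i<j$, so every $d^r$ is lower block triangular with blocks $d_{ij}^r:\Omega_j^r\to\Omega_i^{r+1}$, which is exactly (2).

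The main obstacle is bookkeeping rather than conceptual: one must verify that the ordering coming from (\ref{eq:KobPA}) and the ordering coming from the numeration of $\Phi$ differ only by a row permutation that Lemma \ref{lem:rowsexchange} can absorb, and that the signs $(-1)^{\sum_{j<i}r_j}$ in (\ref{eq:KobPA}) do not disturb the vanishing pattern, which they cannot, since they merely rescale existing blocks. The essential input is the additivity of $D(\Psi^s,-)$, so that the triangularity of $f$ with respect to the numeration passes verbatim to triangularity of $d$ with respect to the induced block decomposition.
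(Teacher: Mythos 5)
Your proof is correct and follows essentially the same route as the paper: property (1) is read off from the first line of (\ref{eq:KobPA}) together with additivity of $D(\Psi^s,-)$, and property (2) comes from observing that $D(e,I_{\Phi^t})$ contributes only to diagonal blocks while the off-diagonal contributions $\pm D(I_{\Psi^s},f_{ij})$ vanish for $i<j$ because the numeration forces $f_{ij}=0$ there. Your extra care with the row ordering via Lemma \ref{lem:rowsexchange} and with the signs is a harmless elaboration of the same argument.
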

\begin{proof}
The two statements follows immediately from the definition of $D(\Psi,\Phi)$, equations (\ref{eq:KobPA}).  Obviously, if $s\leq s_0$ or $s \geq s_M$ we consider $\Psi^s=0$.\\
\indent To prove that by providing this order to the elements in the complex we obtain ($N\times N$)-block  lower triangular matrices in the differentials,  we see that given $r=s+t$, the matrix $d^r$ is defined by the second of the equations (\ref{eq:KobPA}), and is given by \begin{equation}\label{eq:diffbasic1}\left.d\right|_{D(\Psi^{s},\Phi^{t})}=
D(e,I_{\Phi^{t}})+(-1)^{s}D(I_{\Psi^{s}},f).\end{equation} Since $d_{ii}$ are of the form  $d_{ii}^r:D(\Psi^s,\Phi^i) \longrightarrow D(\Psi^{s+1},\Phi^i)$, the summands in  $D(e,I_{\Phi^{t}})$ are clearly concentrated in the blocks located in the diagonal. Any other block is a morphism of the form $d_{ij}^r:D(\Psi^s,\Phi^j) \longrightarrow D(\Psi^{s},\Phi^i)$ with $i \neq j$ which are part of $\pm D(I_{\Psi^s},f)$ in the right side of equation (\ref{eq:diffbasic1}).  Furthemore, if $i<j$, $f_{ij}:\Phi^j\rightarrow \Phi^i$ is the zero cobordism.
\qed \end{proof}

\parpic[r]{$\begin{array}{c}\includegraphics[width=1.5in]%
{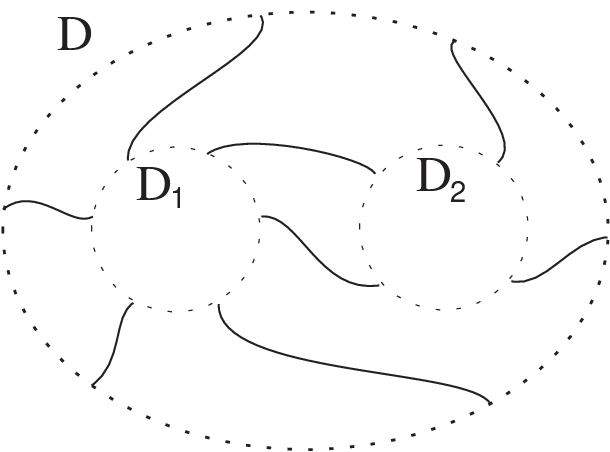}\end{array}$}We illustrate the previous proposition with an example. Let $D$ be the binary operator defined from the planar arc diagram of the right. If we place the complex

 \begin{equation*} \begin{array}{c} \Psi = \end{array} \hspace{.1cm} \begin{array}{c}\includegraphics[scale=0.5]%
{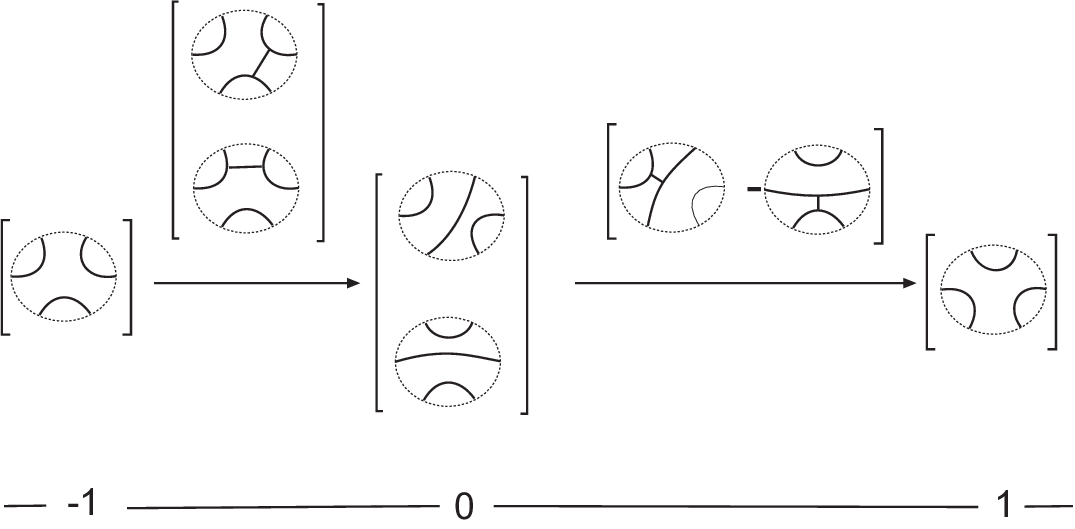}\end{array} \end{equation*}
in the first entry of $D$ and \begin{equation*} \begin{array}{c}\Phi = \end{array} \hspace{.1cm} \begin{array}{c}\includegraphics[scale=0.5]%
{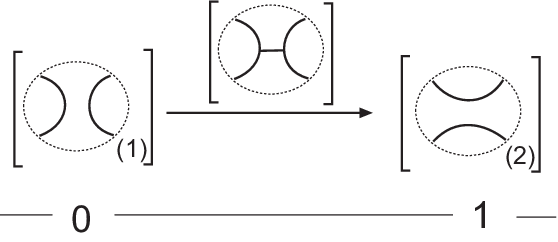}\end{array} \end{equation*}
in the second entry. Once we have embedded these complexes in $D$, we obtain a new complex:\\
\begin{equation*}\begin{array}{c}D(\Psi,\Phi)=  \end{array} \begin{array}{c}\includegraphics[scale=0.6]%
{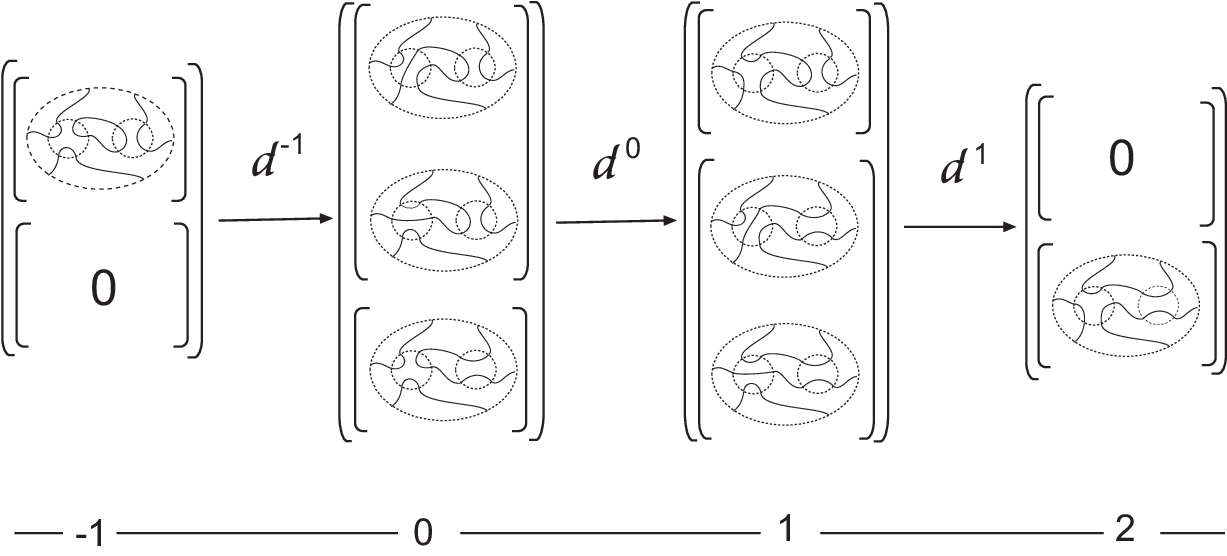}\end{array}\end{equation*}
The differentials in this complex can be seen as block-lower-triangular matrices, as they are displayed in Figure \ref{Fig:differentials}.
\begin{figure}[hbt] \centering
\begin{tabular}{c}\begin{tabular}{cc}\begin{tabular}{c} $d^{-1}=$ \end{tabular}\begin{tabular}{c}\includegraphics[scale=0.7]%
{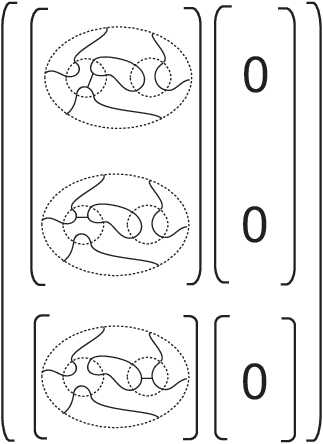}\end{tabular} & \begin{tabular}{c} $d^{0}=$ \end{tabular}\begin{tabular}{c}\includegraphics[scale=0.7]%
{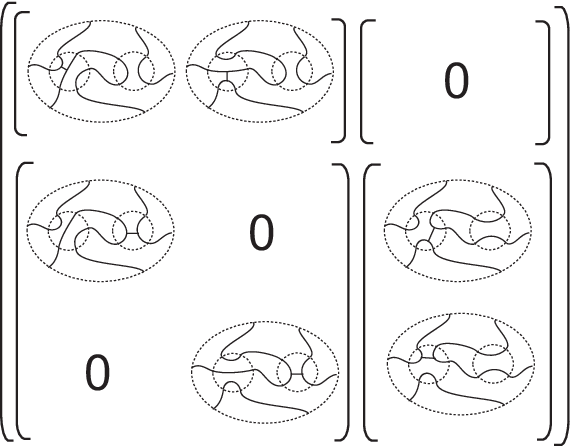}\end{tabular} \end{tabular} \\
\vspace{.1cm} \\
\begin{tabular}{c} $d^{1}=$ \end{tabular}\begin{tabular}{c}\includegraphics[scale=0.7]%
{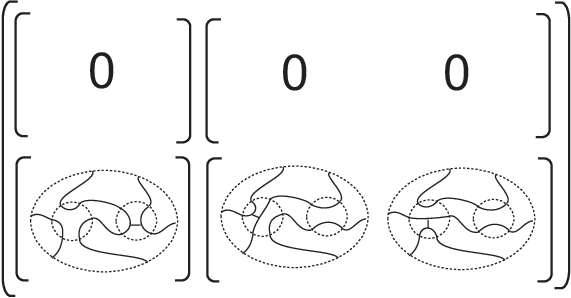}\end{tabular} \end{tabular}
\caption{The differentials in the complex $D(\Psi,\Phi)$. The blocks in the diagonal are the differentials  $d^r_{ii}:D(\Psi^{s},\Phi^i)\rightarrow D(\Psi^{s+1},\Phi^i)$. the elements in the blocks below the diagonals could have a sign shift. The blocks above the diagonal are blocks of zeros.
}\label{Fig:differentials}
\end{figure}
\begin{remark}\label{re:ComplexSmoothingComposition} The blocks $\Omega^r_i=D(\Psi^s,\Phi^i)$ in $\Omega^r$, and the blocks $d_{ii}^r:D(\Psi^s,\Phi^i) \longrightarrow D(\Psi^{s+1},\Phi^i)$ in the diagonal of $d^r$, determine the $N$ complexes \[D(\Psi,\Phi^i)=\xymatrix{  \cdots \Omega_{ii}^r
\ar[r]^{d_i^r} & \Omega_{ii}^{r+1}
  \cdots } \, \text{ } \, i \in {1,\dots , N} \] \qed
\end{remark}

\begin{proposition}\label{prop:GaussEliminitionInTriangular}Assume that the three differential matrices in the four term complex segment in $\Mat(\calC)$ of lemma \ref{lem:GaussianElimination} are block-lower-triangular matrices. After applying gauss elimination, the resulting three differential matrices in the four term complex segment are also block-lower-triangular matrices. Furthermore, the lowest right block of the three initial differential matrices remained unchanged after the Gauss elimination.
\end{proposition}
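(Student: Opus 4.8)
The plan is to track how Gaussian elimination acts on the individual blocks of the three matrices, using the fact that the isomorphism $\phi\colon b_1\to b_2$ joins two smoothings that sit in a single block of the numeration, say the diagonal block of index $p$ (this is the situation of Remark \ref{re:ComplexSmoothingComposition}, where the eliminated invertible cobordisms live inside one complex $D(\Psi,\phi_p)$). Comparing (\ref{eq:BeforeGE}) with (\ref{eq:AfterGE}), two of the three resulting matrices need no real work: the first passes from $\begin{pmatrix}\alpha\\\beta\end{pmatrix}$ to $(\beta)$ and the third from $\begin{pmatrix}\mu&\nu\end{pmatrix}$ to $(\nu)$, so $\beta$ and $\nu$ are obtained merely by deleting the block-row (respectively block-column) indexed by $p$. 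Deleting a single row or column from a block-lower-triangular matrix preserves the relative order of the surviving blocks, so $\beta$ and $\nu$ remain block-lower-triangular; and since their entries are literally unchanged, their lowest-right blocks are untouched. Hence the entire content of the proposition concerns the middle matrix, whose relevant block changes from $\epsilon$ to $\epsilon-\gamma\phi^{-1}\delta$.

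The crux is to pin down the supports of $\gamma$ and $\delta$. Since the middle matrix in (\ref{eq:BeforeGE}) is block-lower-triangular and $b_2$ lies in block $p$, the target row occupied by $b_2$ can receive nonzero cobordisms only from source blocks of index $\le p$; hence $\delta$, being the part of that row coming from $D$, is supported on source blocks of index $<p$ (together, possibly, with the remainder of block $p$). Dually, $b_1$ occupies the source column of block $p$, so $\gamma$ — the part of that column landing in $E$ — is supported on target blocks of index $>p$ (together with the remainder of block $p$). I would record these two vanishing patterns explicitly, since they are exactly what block-lower-triangularity of the middle matrix, combined with the diagonal placement of $\phi$, provides.

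With the supports in hand the conclusion is immediate. The composite $\gamma\phi^{-1}\delta$ can have a nonzero $(i,j)$ block only when $\delta$ contributes from a block $j\le p$ and $\gamma$ contributes to a block $i\ge p$, so its support lies in the region $i\ge p\ge j$. Every such entry satisfies $i\ge j$, so it lies on or below the block diagonal, and the only diagonal position it can reach is $(p,p)$. Consequently $\epsilon-\gamma\phi^{-1}\delta$ is again block-lower-triangular, and it agrees with $\epsilon$ in every diagonal block other than $(p,p)$; in particular, as long as the eliminated block is not the last one, the lowest-right block is unchanged. Equivalently, one may phrase this through the filtration whose $k$-th step is the span of the blocks of index $\ge k$: block-lower-triangularity says each step is a subcomplex, and the support computation shows that eliminating a diagonal isomorphism in block $p$ leaves every step above level $p$ literally unaltered.

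The step I expect to be the genuine obstacle is the one in the second paragraph — establishing the supports of $\gamma$ and $\delta$ — because it rests squarely on $\phi$ being a \emph{diagonal} block isomorphism. If $\phi$ instead connected smoothings lying in two different blocks, the supports of $\gamma$ and $\delta$ could overlap in a way that pushes $\gamma\phi^{-1}\delta$ strictly above the block diagonal, destroying triangularity; so the argument genuinely uses that the isomorphisms removed in the reduction are internal to a single block $D(\Psi,\phi_p)$. Once that placement is justified, everything else is routine bookkeeping with the block indices of $\alpha,\beta,\gamma,\delta,\epsilon,\mu,\nu$.
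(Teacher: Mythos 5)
Your argument is correct and is essentially the paper's own proof: the paper likewise disposes of $\beta$ and $\nu$ immediately, then writes the middle matrix in the refined block form $\left(\begin{smallmatrix}\phi & \delta_1 & 0\\ \gamma_1 & \epsilon_1 & 0\\ \gamma_2 & \epsilon_2 & \epsilon_3\end{smallmatrix}\right)$ (i.e.\ exactly your support statement $\delta=(\delta_1\ 0)$, in the case where $b_1,b_2$ sit in the first block, as the ordering in (\ref{eq:BeforeGE}) forces) and computes $\epsilon-\gamma\phi^{-1}\delta$ to see that only the first block column changes while $\epsilon_3$ survives. Your version with a general diagonal block index $p$, and your remark that the argument would fail if $\phi$ joined two different blocks, is a slight strengthening of the same computation rather than a different route.
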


\begin{proof} It is clear that if $\left(
                                     \begin{array}{c}
                                       \alpha \\
                                       \beta\\
                                     \end{array}
                                   \right)
$, $\left(
      \begin{array}{cc}
        \phi & \delta \\
        \gamma & \epsilon \\
      \end{array}
    \right)
$, and $\left(
          \begin{array}{cc}
            \mu & \nu \\
          \end{array}
        \right)
$ are block-lower-triangular matrices, so they are $\beta$, $\epsilon$, and $\nu$. Therefore, it is clear that after Gauss elimination, the first and the third of the differential matrices in the form term complex are block-lower-triangular matrices with the same initial lowest-right block.\\
\indent To prove that the same happens with the second block, we observe that if $\left(
      \begin{array}{cc}
        \phi & \delta \\
        \gamma & \epsilon \\
      \end{array}
    \right)
$ is a block-lower-triangular matrices then $\left(
      \begin{array}{cc}
        \phi & \delta \\
        \gamma & \epsilon \\
      \end{array}
    \right)
=\left(
      \begin{array}{ccc}
        \phi & \delta_1 & 0 \\
        \gamma_1 & \epsilon_1 & 0 \\
\gamma_2 & \epsilon_2 & \epsilon_3 \\
      \end{array}
    \right)
$; where $\delta= \left(
                    \begin{array}{cc}
                      \delta_1 & 0 \\
                    \end{array}
                  \right)$, $\gamma=\left(
                                      \begin{array}{c}
                                        \gamma_1 \\
                                        \gamma_2 \\
                                      \end{array}
                                    \right)$, and  $\epsilon=\left(
                                                              \begin{array}{cc}
                                                                \epsilon_1 & 0 \\
                                                               \epsilon_2 & \epsilon_3 \\
                                                              \end{array}
                                                            \right)$. Each 0 in the previous matrices is actually a block of zeros.\\
\indent An immediate consequence of the previous paragraph is that the second differential matrix in the four term complex segment  is given by \[ \epsilon - \gamma \phi^{-1} \delta =\left(
                                      \begin{array}{cc}
                                        \epsilon_1 - \gamma_1 \phi^{-1} \delta_1 & 0 \\
                                        \epsilon_2 - \gamma_2 \phi^{-1} \delta_1 & \epsilon_3 \\
                                      \end{array}
                                    \right).
 \]
This completes the proof. \qed
\end{proof}


\section{The category $\Kobo$ and alternating planar algebras}  \label{sec:Alternating}
 We introduce an alternating orientation in the objects of $\Cobdl^3(k)$. This orientation induces an orientation in the cobordisms of this category. These oriented $k$-strand smoothings and cobordisms form the objects and morphisms in
a new category. The composition between cobordisms in
this oriented category is defined in the standard way, and it is regarded as a
graded category, in the sense of \cite[Section 6]{Bar1}. We subject out the
cobordisms in this oriented category to the relations in (\ref{eq:LocalRelations})
and denote it as $\Cobo(k)$. Now we can follow \cite{Bar1} and define sequentially the categories, $\Mat(\Cobo(k))$, $\Kom(\Mat(\Cobo(k)))$ and  $\Komh(\Mat(\Cobo(k)))$. This last two categories are what we denote $\Kobo(k)$, and $\Koboh$. As usual, we use $\Kobo$, and $\Koboh$, to denote $\bigcup_k \Kobo(k)$ and $\bigcup_k \Koboh(k)$ respectively.\\
\indent \parpic[r]{\includegraphics[scale=.45]%
{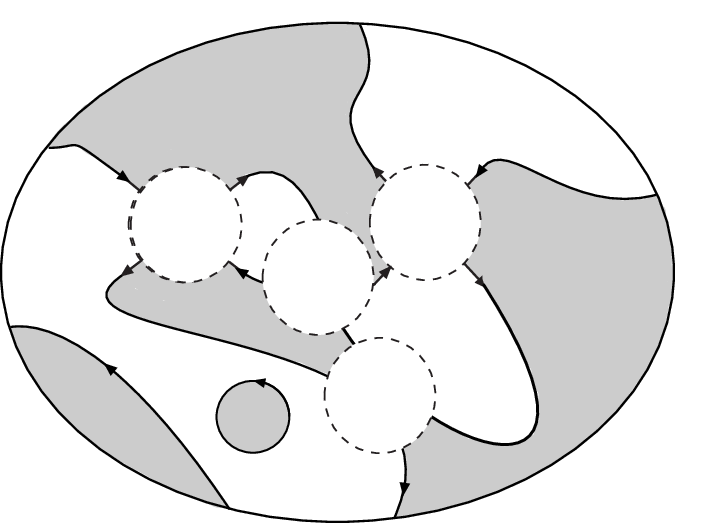}} We denote the class of oriented smoothings as $\calSo$. An alternatively oriented $d$-input
planar diagram, see \cite{Bur}, provides a good tool for the
horizontal composition of objects in $\calSo$, $\Cobo$, $\Mat(\Cobo(k))$, $\Kobo$, and $\Koboh$. The orientation in the diagrams can be provided as in the figure at the right. For making this text a little more self-contained, we are going to recall briefly some concepts presented before in \cite{Bur}. Given oriented smoothings
$\sigma_1,...,\sigma_d$, a suitable alternating $d$-input planar diagram $D$ to
compose them has the property that the $i$-th input disc has as many
boundary points as $\sigma_i$. Moreover placing $\sigma_i$ in the $i$-th input
disc, The orientation (the coloring) of $\sigma_i$ and $D$ match.\\

\indent Given an open strand $\alpha$ of an alternating oriented smoothing $\sigma$, possibly with loops, enumerate the boundary points of $\sigma$ in such a way that $\alpha$ can be denoted by $(0,i)$. The {\it rotation number} of $(0,i)$, $R(\alpha)$, is
$\frac{i-k}{2k}$. If $\alpha$ is a loop, $R(\alpha)=1$ if $\alpha$ is oriented
counterclockwise, and $R(\alpha)=-1$ if $\alpha$ is oriented clockwise. The
rotation number of $\sigma$ is the sum of the
rotation numbers of its strings. See figure \ref{Fig:rotate}
 \begin{figure}[th] \centering
\begin{tabular}{ccc}
\begin{tabular}{c} \includegraphics[scale=1.8]%
{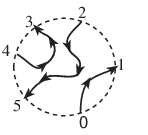} \end{tabular} & \hspace{1cm} & \begin{tabular}{c}
{\Large$R(\alpha)=\frac{1-3}{6}$}\end{tabular} \end{tabular}%
\caption{$\alpha =(0,1)$, $R(\alpha)=-\frac{2}{6}=-\frac{1}{3}$. The rotation number of
the complete resolution is 0}\label{Fig:rotate}
\end{figure}
 We are going to use this alternating diagrams to compute non-split alternating tangles, and we want to preserve the non-split property of the tangle. Hence, it will be better if we use $d$-input type $\calA$ diagrams.
\parpic[r]{\includegraphics[scale=.6]%
{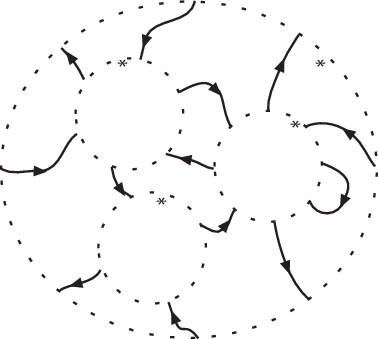}}A $d$-input type-$\mathcal{A}$ diagram  has an even number of
strings ending in each of its boundary components, and every string
that begins in the external boundary ends in a boundary of an
internal disk. We can classify the strings as:
{\it curls}, if they have its ends in the same input disc; {\it interconnecting arcs}, if its ends are in
different input discs, and {\it boundary arcs}, if they have one end in an input disc and the other in the external boundary
of the output disc. The arcs and the boundaries of the discs divide the surface of the diagram into disjoint regions. Some arcs and regions will be useful in the following definitions and propositions.

\begin{definition}We assign the following numbers to every $d$-input planar diagram $D$:
\begin{itemize}
\item $i_D$: number of interconnecting arcs and curls, i.e., the number of non-boundary arcs.
\item $w_D$: number of negative internal regions. That is, in the checkerboard coloring, the white regions
whose boundary does not meet the external boundary of $D$.
\item $R_D$: the rotation associated number, which is given by the
formula \[ R_D=\frac{1}{2}(1+i_D-d)-w_D\]
\end{itemize}.
\end{definition}
\begin{proposition}\label{prop:AsoRotNumber}
Given the smoothings $\sigma_1,...,\sigma_d$ and a suitable d-input planar
diagram $D$, where every smoothing can be placed, the rotation
number of $D(\sigma_1,...,\sigma_d)$ is:
\begin{equation}\label{eq:AsocRotNumber}
R(D(\sigma_1,...,\sigma_d))=R_D+\sum_{i=1}^dR(\sigma_i)\end{equation}
\end{proposition}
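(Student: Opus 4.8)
The plan is to reinterpret the combinatorial rotation number as a genuine turning number and then exploit its additivity under concatenation. First I would fix once and for all a convention for the tangent direction at every marked point (say, radial, matching the alternating colouring), and check that with this convention the number $\frac{i-k}{2k}$ attached to a strand $(0,i)$ is exactly the normalized total turning $\tau(\alpha)=\frac{1}{2\pi}\int_\alpha d\theta$ of an embedded arc joining the two marked points, while a simple loop contributes $\tau=+1$ or $\tau=-1$ according to its orientation. This identifies the rotation number with a geometric quantity, so that $R(\sigma)=\sum_{\text{strings}}\tau$ for every oriented smoothing, the sum running over the strings of $\sigma$.

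Next I would record the key additivity property: if an oriented curve is cut into consecutive pieces whose tangent directions agree at the cut points, then its turning number is the sum of the turning numbers of the pieces, with \emph{no} corner corrections precisely because the tangents match. The orientation-matching built into an alternating planar composition guarantees that at every marked point of every input disc the tangent of the incoming $D$-arc agrees with the tangent of the strand of the $\sigma_i$ placed there, and that the output-boundary convention agrees with the convention used to read off $R$ of the composite. Each string of $D(\sigma_1,\dots,\sigma_d)$ then decomposes into pieces that are either strands of some $\sigma_i$ or arcs of $D$, and summing $\tau$ over all strings gives
\[ R(D(\sigma_1,\dots,\sigma_d))=\sum_{i=1}^{d}\sum_{\text{strands of }\sigma_i}\tau\;+\;\sum_{\text{arcs of }D}\tau\;=\;\sum_{i=1}^{d}R(\sigma_i)+Q_D, \]
where $Q_D:=\sum_{\text{arcs of }D}\tau(\text{arc})$ depends only on $D$; indeed the arcs of $D$ are fixed no matter how the $\sigma_i$ join them into strings, so the right-hand sum is a constant of the diagram.

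It then remains to prove $Q_D=R_D=\frac{1}{2}(1+i_D-d)-w_D$. Since $Q_D$ is independent of the $\sigma_i$, one option is to evaluate the displayed identity on a single convenient family of test smoothings for which both sides are transparent, chosen so that every negative internal region of $D$ bounds a clockwise loop of the composite, each contributing exactly $-1$. A cleaner alternative is a direct Gauss--Bonnet computation on the planar region $\Sigma$ obtained from the output disc by deleting the $d$ input discs: with a flat metric and geodesic boundary the Euler characteristic $\chi(\Sigma)=1-d$ organizes the constant $\frac{1}{2}(1-d)$, the non-boundary arcs supply the $\frac{1}{2}i_D$ term, and the internal regions carrying the reversed orientation produce the $-w_D$ summand.

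I expect this last step to be the main obstacle. The additivity argument above is essentially formal once the tangent conventions are fixed, but pinning down $Q_D$ as $R_D$ forces one to get every convention exactly right: the half-integer turning of the individual arcs, the corner terms where arcs meet the disc boundaries, and --- most delicately --- the identification of the negative internal regions counted by $w_D$ with the clockwise loops of the composite, which is what yields the $-w_D$ term. A secondary point demanding care is the claim of no corner corrections in the concatenation, which rests entirely on the alternating (colour-matching) hypothesis on $D$; without it the pieces would fail to glue smoothly and spurious $\pm\frac{1}{2}$ contributions would appear.
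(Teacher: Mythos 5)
The paper itself offers no proof of Proposition \ref{prop:AsoRotNumber}; it is imported from the author's earlier work \cite{Bur}, so there is nothing in-text to compare your argument against line by line. Judged on its own terms, your reduction is sound in outline: identifying $R$ with the normalized turning number $\tau$, using additivity of $\tau$ under tangent-matching concatenation, and concluding that $R(D(\sigma_1,\dots,\sigma_d))-\sum_i R(\sigma_i)$ equals a quantity $Q_D$ depending only on $D$. One step you describe as a mere convention check actually carries real content: the equality $\tau(\alpha)=\frac{i-k}{2k}$ for a strand of a smoothing is only true because the strand is \emph{embedded} in a disc --- an arc with the same radial end-tangents but an extra winding would have $\tau=\frac{i-k}{2k}+m$ with $m\neq 0$, and ruling this out requires closing the arc up along the boundary circle and invoking the Umlaufsatz (turning number $\pm 1$ for a simple closed curve, with the sign pinned down by which complementary region is enclosed). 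The same argument must then be re-run for the strings of the composite smoothing, and one must observe that the arcs of $D$, which live in a disc with $d$ holes and could a priori wind, are constrained by the requirement that the composite be an honest smoothing. None of this is fatal, but it is not the routine bookkeeping your write-up suggests.

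The genuine gap is the final step, which you yourself flag as ``the main obstacle'': the identity $Q_D=R_D=\frac{1}{2}(1+i_D-d)-w_D$ is never established. Everything before it shows only that \emph{some} additive constant attached to $D$ exists --- a statement that is nearly formal once additivity of $\tau$ is granted --- whereas the explicit dependence on the number of non-boundary arcs $i_D$, the number of inputs $d$, and the number of negative internal regions $w_D$ is the entire quantitative content of the proposition, and it is what every later computation in the paper (e.g.\ Proposition \ref{prop:RotNumberBasic} and the rotation-constant bookkeeping in Lemma \ref{Lemma:2InputDiagonalComplex}) actually uses. You name two plausible strategies (evaluation on a family of test smoothings for which the composite's loops are in bijection with the negative internal regions, or a Gauss--Bonnet computation on the $d$-holed disc), but neither is carried out, and the delicate points you correctly anticipate --- the corner terms where $D$-arcs meet disc boundaries and the identification of the $-w_D$ term with clockwise loops --- are precisely where such an argument could go wrong by a half-integer per arc. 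As it stands the proposal is an honest reduction of the proposition to an unproved combinatorial identity about planar diagrams, not a proof.
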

\begin{definition} An alternating planar algebra is a triplet $\{\calP,\calD,\calO\}$ in which $\calP$, $\calD$, and $\calO$ have the same properties as in the definition of a planar algebra but with the collection $\calD$ containing only $\calA$-type planar diagrams.
\end{definition}
Diagrams with only one or two input discs deserves special attention. Operators defined from diagram like these are very important for our purposes since some of them are considered as the generators of the entire collection of operators in a connected alternating planar algebra.
\begin{figure}[th] \centering
\includegraphics[scale=.6]%
{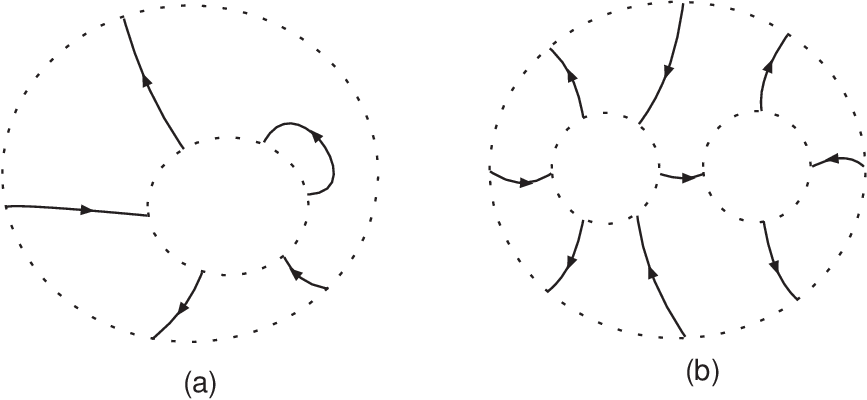}%
\caption{Examples of basic planar diagrams}\label{Fig:bplanar}
\end{figure}
\begin{definition} A basic planar diagram is a 1-input alternating planar
diagram with a curl in it,  or a 2-input alternating planar diagram with only one interconnecting arc. A basic operator is one defined from a basic planar diagram.
A negative unary basic operator is one defined from a basic 1-input diagram where the curl completes a negative loop. A positive unary basic operator is one defined from a basic 1-input diagram where the curl completes a positive loop. A binary operator is one defined from a basic 2-input planar diagram.
\end{definition}
\begin{proposition}\label{prop:RotNumberBasic}
The rotation associated number of a planar diagram belongs to
$\frac{1}{2}\mathbb{Z}$ and the case when we have a basic planar
diagram it is given as follows:
\begin{itemize}
\item If $D$ is  a negative unary basic operator, $R_D=-\frac{1}{2}$
\item If $D$ is  a  binary basic operator, $R_D=0$
\item If $D$ is  a positive unary basic operator, $R_D=\frac{1}{2}$
\end{itemize}
\end{proposition}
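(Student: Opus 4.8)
The plan is to read off the three integers $i_D$, $d$, and $w_D$ from each basic diagram and substitute them into the defining formula $R_D=\frac{1}{2}(1+i_D-d)-w_D$. The general membership $R_D\in\frac{1}{2}\mathbb{Z}$ needs no case analysis: $i_D$, $d$, and $w_D$ are all nonnegative integers, so $1+i_D-d$ is an integer and $\frac{1}{2}(1+i_D-d)\in\frac{1}{2}\mathbb{Z}$; subtracting the integer $w_D$ leaves $R_D\in\frac{1}{2}\mathbb{Z}$. It remains to pin down the three displayed values, and for this the only nontrivial ingredient is the count $w_D$ of negative internal regions.

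I would treat the binary case first, because there $w_D$ vanishes for a clean structural reason. A binary basic diagram has $d=2$ and a single interconnecting arc with no curls, so $i_D=1$ and the combinatorial term is $\frac{1}{2}(1+1-2)=0$; thus $R_D=-w_D$ and it suffices to show $w_D=0$. Contracting each input disc to a point, the non-boundary and boundary arcs become the edges of a planar graph on the two disc-vertices and the external boundary. An internal region is one cut off from the external boundary, which would require a cycle among the disc-vertices built from non-boundary arcs alone; a single interconnecting arc (and no curls) produces no such cycle, so there is no internal region, $w_D=0$, and $R_D=0$.

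For a unary basic operator $d=1$, and the curl is the only non-boundary arc (an interconnecting arc would need a second input disc), so $i_D=1$ and the combinatorial term is $\frac{1}{2}(1+1-1)=\frac{1}{2}$, giving $R_D=\frac{1}{2}-w_D$. The entire positive/negative distinction is therefore carried by $w_D$. The curl, together with the arc of the input boundary between its endpoints, bounds exactly one region, and this region is \emph{internal}: trapped between the curl and the input disc, it cannot reach the external boundary of $D$ (any further regions, being bounded in part by boundary arcs or by the external boundary, do meet it and are not internal). Hence $w_D=1$ precisely when this enclosed region is white, and $w_D=0$ when it is black.

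The step I expect to be the main obstacle is matching the color of this enclosed region to the sign of the loop the curl completes, since this is where the orientation conventions must be invoked rather than merely counted. In an alternating diagram the orientation of every arc is dictated by the checkerboard coloring, so the orientation of the loop formed by the curl is forced by whether the black or the white region lies on its inside. With the convention fixed in Figure~\ref{Fig:bplanar}, a curl enclosing a white region completes a clockwise (negative) loop, and one enclosing a black region completes a counterclockwise (positive) loop. Therefore $w_D=1$ for a negative unary operator, yielding $R_D=\frac{1}{2}-1=-\frac{1}{2}$, and $w_D=0$ for a positive unary operator, yielding $R_D=\frac{1}{2}$. As a consistency check, one can place in the input disc the single strand that the curl closes into a lone loop and apply Proposition~\ref{prop:AsoRotNumber} in the form $R_D=R(D(\sigma))-R(\sigma)$; the rotation numbers of the resulting loop and of the strand then reproduce $R_D=\pm\frac{1}{2}$.
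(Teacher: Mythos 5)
The paper itself states Proposition \ref{prop:RotNumberBasic} without proof (it is recalled from the earlier paper cited as the source of these definitions), so there is no internal argument to compare against; your direct substitution into $R_D=\frac{1}{2}(1+i_D-d)-w_D$ is the natural and essentially only route. The main line of your computation is sound: the membership in $\frac{1}{2}\mathbb{Z}$ is immediate, the counts $i_D=1$, $d=2$, $w_D=0$ for a basic binary diagram are justified correctly (a separating cycle would need at least two non-boundary arcs between the two disc-vertices, or a curl), and the unary case correctly reduces to deciding the color of the unique internal region enclosed by the curl. The one substantive step you do not actually derive is the color-to-sign dictionary: you assert from Figure \ref{Fig:bplanar} that a shaded enclosed region corresponds to a counterclockwise (positive) loop. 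That assertion is consistent with the paper's worked example (the operator $U_1$ following Example \ref{Ex:diagonal} has $R_{U_1}=\frac{1}{2}$, its curl completes a loop of rotation number $+1$, and the formula then forces $w_{U_1}=0$, i.e.\ a shaded enclosed region), but since this is precisely where the $+\frac{1}{2}$ versus $-\frac{1}{2}$ distinction lives, it deserves an explicit derivation from the rule that fixes strand orientations relative to the checkerboard coloring rather than an appeal to a picture.

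Your closing consistency check, however, is wrong as stated and should be deleted or repaired. If you place in the input disc the single strand that the curl closes into a lone loop, the output is a boundaryless diagram consisting of one loop; the paper assigns a loop rotation number $\pm 1$ and a single open strand of a $2$-point smoothing rotation number $\frac{1-1}{2}=0$, so $R(D(\sigma))-R(\sigma)=\pm 1$, not $\pm\frac{1}{2}$. Far from confirming the answer, this computation contradicts it, which shows that Proposition \ref{prop:AsoRotNumber} cannot be invoked for a composition whose output disc has empty boundary. The check only works if the output remains open: for instance, inserting the second smoothing of $\Omega_1$ (rotation number $+\frac{1}{2}$, two strands each contributing $+\frac{1}{4}$) into $U_1$ turns one strand into a loop (contribution jumps from $+\frac{1}{4}$ to $+1$) and leaves the other as the unique $1$-strand smoothing (contribution drops from $+\frac{1}{4}$ to $0$), for a net change of $+\frac{3}{4}-\frac{1}{4}=+\frac{1}{2}=R_{U_1}$. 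The moral is that $R_D$ is not the rotation number of the loop the curl completes; it is the total change in rotation number, which includes the re-normalization $\frac{i-k}{2k}$ of the surviving open strands, and the two effects only combine to $\pm\frac{1}{2}$ when at least one open strand survives.
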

\begin{proposition}\label{prop:compplanar} Any operator $D$ in an alternatively oriented planar algebra is the finite composition of basic operators.
\end{proposition}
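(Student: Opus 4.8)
The plan is to argue by induction on the pair $(d, i_D)$, ordered lexicographically with the number of input discs $d$ as the primary key and the number of non-boundary arcs $i_D$ as the secondary key. Each reduction step I describe strictly decreases this pair, so the induction is well founded, and the two shapes of basic operator correspond exactly to the two shapes of reduction. The base case is $d=1$ with $i_D=0$: a one-input type-$\mathcal{A}$ diagram with no curls has only boundary arcs, and by planarity these must form the radial identity diagram between the input and output discs. The identity operator is regarded as the empty composition of basic operators, so there is nothing to check in this case.

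The first reduction handles curls. Suppose $D$ contains a curl; choose an \emph{innermost} one, that is, a curl joining two adjacent boundary points of some input disc $\sigma_i$ and enclosing no other arc. Let $U_i$ be the one-input diagram whose input disc matches $\sigma_i$, which carries exactly that curl, and which routes every remaining boundary point radially to the output. Then $U_i$ is a basic unary operator in the sense of the definition preceding Proposition~\ref{prop:RotNumberBasic} (positive or negative according to whether the completed loop is positive or negative, with $R_{U_i}=\pm\tfrac12$), and $D$ factors as $D = D' \circ_i U_i$, where $D'$ is obtained from $D$ by deleting the curl and shrinking the $i$-th input disc by the two consumed points. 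The diagram $D'$ is again an alternating type-$\mathcal{A}$ diagram with the same number of input discs but with $i_{D'} = i_D - 1$, so the pair $(d,i_D)$ drops and the induction hypothesis applies to $D'$.

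The second reduction handles the curl-free situation. If $d=1$ we are in the base case, so assume $d \ge 2$. Connectedness of the alternating planar algebra forces at least one interconnecting arc, and I choose an innermost one: an interconnecting arc $a$ joining discs $\sigma_i$ and $\sigma_j$ such that one of the two regions it bounds (together with boundary segments of the two discs) contains no further input disc. Merging $\sigma_i$ and $\sigma_j$ across $a$ through that empty region produces a single new input disc, and exhibits $D = D' \circ B$, where $B$ is the binary basic operator carrying the two discs and the single arc $a$ (so $R_B = 0$ by Proposition~\ref{prop:RotNumberBasic}) and $D'$ is a $(d-1)$-input diagram. Any further interconnecting arcs that previously joined $\sigma_i$ and $\sigma_j$ now have both ends on the merged disc, hence become curls of $D'$; this may raise $i_{D'}$, but the primary key $d$ has strictly decreased, so the induction hypothesis still applies to $D'$. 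Composing the decompositions of $D'$ with $U_i$, respectively $B$, completes the induction.

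The genuine work is concentrated in the geometry of these two merging steps, not in the formal induction. The delicate point is to guarantee that an innermost curl, respectively an innermost interconnecting arc, always exists, and that peeling it off yields an honest alternating type-$\mathcal{A}$ diagram: one must verify that no string is ever created between two external-boundary points, that the merged or shrunk disc carries an even number of boundary points, and that the alternating (checkerboard) coloring is inherited so that $D'$ remains composable with $U_i$ or $B$. This planarity-and-orientation bookkeeping is the main obstacle; the rotation-number identity of Proposition~\ref{prop:AsoRotNumber}, namely $R(D(\sigma_1,\dots,\sigma_d)) = R_D + \sum_i R(\sigma_i)$, serves throughout as a consistency check that each factorization respects the associated rotation numbers.
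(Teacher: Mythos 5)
The paper states Proposition \ref{prop:compplanar} without proof (it is effectively imported from \cite{Bur}), so there is no in-paper argument to compare against; your peel-off-one-basic-operator induction is certainly the intended style of argument. But as written it has a genuine gap in the first reduction. You assert that whenever $D$ contains a curl it contains an \emph{innermost} one, joining two adjacent boundary points and enclosing no other arc. This is false in general: take a $2$-input diagram in which the first input disc carries a curl $c$ from its point $p_1$ to its point $p_4$, the region enclosed by $c$ contains the second input disc together with two interconnecting arcs attached to $p_2$ and $p_3$, and the remaining points of the first disc carry boundary arcs. This is a legitimate type-$\mathcal{A}$ alternating diagram with exactly one curl, and that curl is not innermost in your sense, so your case analysis (curls first, merges only in the curl-free situation) never fires and the induction stalls. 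The repair is a reordering rather than a new idea: apply the binary merge whenever $d\geq 2$ (it does not require curl-freeness), and only strip curls once $d=1$, where an innermost curl genuinely exists because no boundary arc can originate inside the region enclosed by a curl, so that region contains only nested curls.

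Two smaller points. First, the ``innermost interconnecting arc'' condition in your second reduction is both unnecessary and not obviously always satisfiable: you should merge $\sigma_i$ and $\sigma_j$ through a thin regular neighbourhood of $\sigma_i\cup a\cup\sigma_j$, which is always a disc and always isolates exactly the one arc $a$ inside the basic binary diagram $B$, with any further $\sigma_i$--$\sigma_j$ arcs becoming curls of the merged disc exactly as you describe (incidentally $i_{D'}=i_D-1$ there, it cannot rise, since those arcs were already counted as non-boundary arcs). Second, you explicitly defer the parity, checkerboard-colouring and orientation checks, calling them ``the main obstacle''; they do all go through because both $B$ (resp.\ $U_i$) and $D'$ inherit their colourings by restriction from $D$ and each reduction consumes exactly two boundary points of a disc, but a complete proof has to say this rather than flag it.
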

\section{Diagonal complexes } \label{sec:On-Diagonal}
 Once we have deleted a loop in an element of $\Kobo$, we obtain a complex
$(\Omega,d)$, which preserves some properties of the former one, but
with a change in the rotation number of the element $\sigma\{q_{\sigma}\}$, in which we have
applied the delooping. In fact, the smoothing has been replaced in
the complex by a couple whose rotation number has changed either by -1 or
by +1. This shift in the rotation number could be
even greater if we continue removing loops in the same smoothing. So it would be a good idea to define a concept that states a relation between the rotation number of $\sigma$ and its grading shift $q_{\sigma}$.
\begin{definition} Let $(\Omega,d)$ be a class-representative of $ \Koboh $, and let
$\sigma_i\{q_{i}\}$ be a shifted degree object in
$\Omega^r$, then its degree-shifted rotation number is
$R(\sigma_i\{q_{i}\})=R(\sigma_i)+q_{i}$
\end{definition}

 \begin{definition} A  diagonal complex is a degree-preserving differential
chain complex $(\Omega,d)$
\[\xymatrix{  \cdots \Omega^r
\ar[r]^{d^r} & \Omega^{r+1}
  \cdots } \]
 in $\Kobo$,  satisfying that for each homological degree $r$ and each shifted degree object $\sigma_i\{q_{i}\}$ in $\Omega^r$, we have that $2r-R(\sigma_i\{q_{i}\})=C_{\Omega}$, where $C_{\Omega}$ is a constant that we call rotation constant of $(\Omega,d)$.
\end{definition}
 Here we have some examples of diagonal complexes in $\Kobo$.
\begin{example}\label{Ex:diagonal} As in \cite{Bar2}, a dotted line represent a dotted curtain, and $\HSaddleSymbol$ stands for the saddle $\smoothing \longrightarrow \hsmoothing$
\begin{enumerate}
\item \[\Omega_1= \begin{diagram}
\node{\begin{tabular}{c}\includegraphics[scale=.5]%
{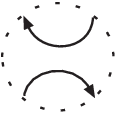}\end{tabular}\{-2\}}\arrow{e,t}{\ISaddleSymbol} \node{\begin{tabular}{c}\includegraphics[scale=.5]%
{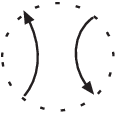}\end{tabular} \{-1\}} \end{diagram}.\] This is the Khovanov homology of the negative crossing $\undercrossing$, now with orientation in the smoothings. Remember that the first term has homological degree -1. In this example the rotation number in the first term is $-\frac{1}{2}$ and in the second term it is $\frac{1}{2}$. Observe that in each case, the difference between 2 times the homological degree $r$ and the shifted rotation number is $\frac{1}{2}$.
\item \begin{figure}[hbt] \centering
\includegraphics[scale=0.6]%
{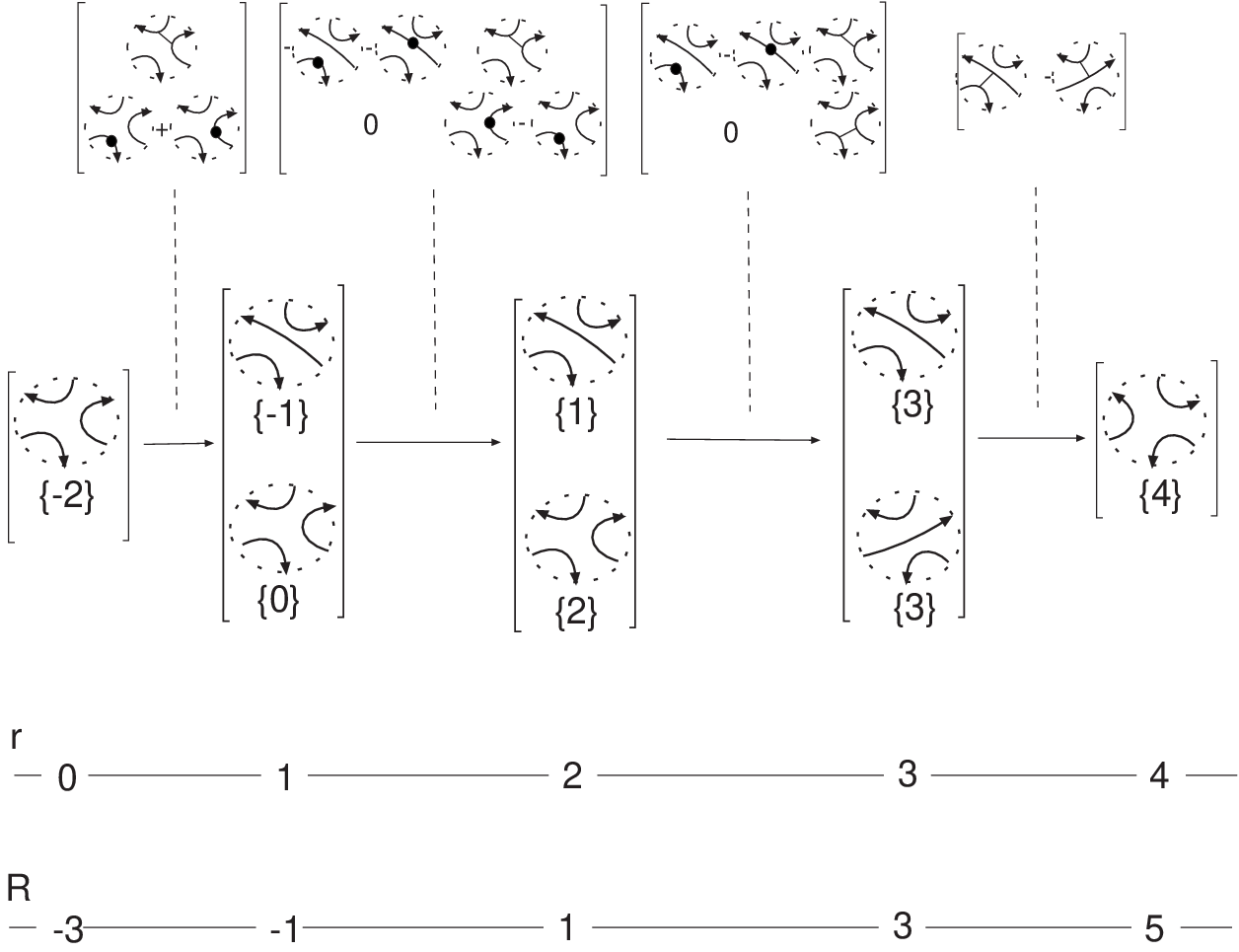}
\caption{A diagonal complex.
}\label{Fig:diagonal}
\end{figure}

 In Figure \ref{Fig:diagonal}, the number below each smoothing is the grading shift of the smoothing. The upper line below the complex represents the homological degree $r$, and the lower one represents the degree-shifted rotation number.  For instance, the rotation number in the first smoothing with homological degree 1 has rotation number 0 and a grading shift by -1. In the second smoothing of the same vector, the rotation number is -1 and its grading shift is 0, so both term has the same degree-shifted rotation number. We see in this example, that for each $r$ we have that $2r-R=3$, so this is a diagonal complex.
\end{enumerate}
\end{example}
Now, we can establish a parallel between what we did with alternating elements in $\calMko$ and diagonal complexes in $\Kobo$ in such a way that we can obtain similar results as those obtained in section 4 of \cite{Bur}.


\subsection{Applying unary operators}
The reduced complexes in $\Kom(\Mat(\Cobo))$ can be inserted in appropriate unary basic planar diagrams, and then apply delooping and gaussian elimination to obtain again a reduced complex in $\Kobo$. This process can be summarized in the following steps:
\begin{enumerate}
\item placing of the complex in the corresponding input disc of the $d$-input planar arc diagram by using equations (\ref{eq:KobPA}),
\item removing the loops obtained and replacing each of them by a copy of $\emptyset\{+1\}\oplus\emptyset\{-1\}$, and
\item applying gaussian elimination, and removing in this way each invertible differential in the complex.
\end{enumerate}

 \begin{definition}Let $(\Omega,d)$ be a chain complex in $\Kom(\Mat(\Cobo(k)))$, then a partial closure of $(\Omega,d)$ is a chain complex of the form $D_l\circ \cdots \circ D_1(\Omega)$ where $0\leq l<k$ and every $D_i$ ($1\leq i\leq l$) is a unary basic operator
\end{definition}
\parpic[r]{\includegraphics[scale=.6]%
{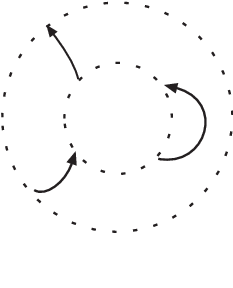}}We have diagonal complexes whose partial closures are again diagonal complexes. For instance, embedding $\Omega_1$ of the example \ref{Ex:diagonal} in a unary basic planar diagram $U_1$ as the one on the right which has an associated rotation number $R_{U_1} = \frac{1}{2}$, produces the chain complex.

\begin{eqnarray*}U_1(\Omega_1) &=& \begin{diagram}
\node{\left[\begin{tabular}{c}\includegraphics[scale=.5]%
{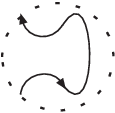}\end{tabular}\{-2\}\right]}\arrow{e,t}{\left[\ISaddleSymbol\right]} \node{\left[\begin{tabular}{c}\includegraphics[scale=.5]%
{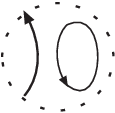}\end{tabular} \{-1\}\right]} \end{diagram}\\
& & \vspace{.5cm}\\
&\sim & \begin{diagram}
\node{\left[\begin{tabular}{c}\includegraphics[scale=.5]%
{figs/alternating121.eps}\end{tabular}\{-2\}\right]}\arrow{e,t}{\left[                                                               \begin{array}{c}
\includegraphics[scale=.3]%
{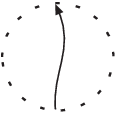} \\
\includegraphics[scale=.3]%
{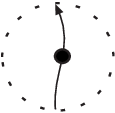} \\
\end{array}
\right]
} \node{\left[\begin{array}{l}\begin{tabular}{c}\includegraphics[scale=.5]%
{figs/alternating131.eps}\end{tabular}\{-2\}\\
\begin{tabular}{c}\includegraphics[scale=.5]%
{figs/alternating131.eps}\end{tabular}\{0\}\end{array}\right]} \end{diagram}
\end{eqnarray*}
The last complex is the result of applying del loping. Applying now gaussian elimination we obtain a homotopy equivalent complex \begin{eqnarray*}U_1(\Omega_1) &\sim& \begin{diagram}
\node{0}\arrow{e,t}{\left[0\right]} \node{\left[\begin{tabular}{c}\includegraphics[scale=.5]%
{figs/alternating131.eps}\end{tabular} \{0\}\right]} \end{diagram}
\end{eqnarray*}
which is also a diagonal complex, but now with rotation constant zero.

\begin{definition}
Let $(\Omega,d)$ be a bounded diagonal complex in $\Kobo$ with rotation constant $C_R$. We say that $(\Omega,d)$ is {\it coherently diagonal} if for any appropriated unary operator with associated rotation number $R_U$, the closure $U(\Omega,d)$ has a reduced form which is a diagonal complex with rotation constant $C_R-R_U$.
\end{definition}
We denote as $\calD(k)$ the collection of all coherently diagonal complexes in $\Kom(\Mat(\Cobo(k)))$, and as usual, we write $\calD$ to denote $\bigcup_k \calD(k)$. It is easy to prove that any coherently diagonal complex satisfies that:
\begin{enumerate}
\item after delooping any of the positive loops obtained in any of its partial closure, by using gaussian elimination, the negative shifted-degree term can be eliminated.
\item after delooping any of the negative loops obtained in any of its partial closure, by using lemma gaussian elimination, the positive shifted-degree term can be eliminated.
\end{enumerate}

\parpic[r]{\includegraphics[scale=.8]%
{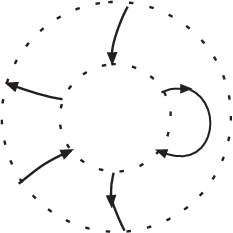}}Since the computation of any other of its partial closures produces other diagonal complex, the complex $\Omega_1$ of the example \ref{Ex:diagonal} is an element of $\calD(2)$. Another example of coherently diagonal complex is the complex $\Omega_2$ of the same example. This last complex has $C_R=3$. All of its partial closures $U(\Omega_2)$ are diagonal complexes with rotation constant given by $C_R-R_U$. Here, we only calculate the one produced by inserting the element in the closure disc $U$, with $R_U=-\frac{1}{2}$, that appears on the right. It will be easy for the reader to compute the other partial closures. Inserting $\Omega_2$ in $U$ produces the complex of Figure \ref{Fig:DiagonalWithLoop}, \begin{figure}[hbt] \centering
\includegraphics[scale=0.6]%
{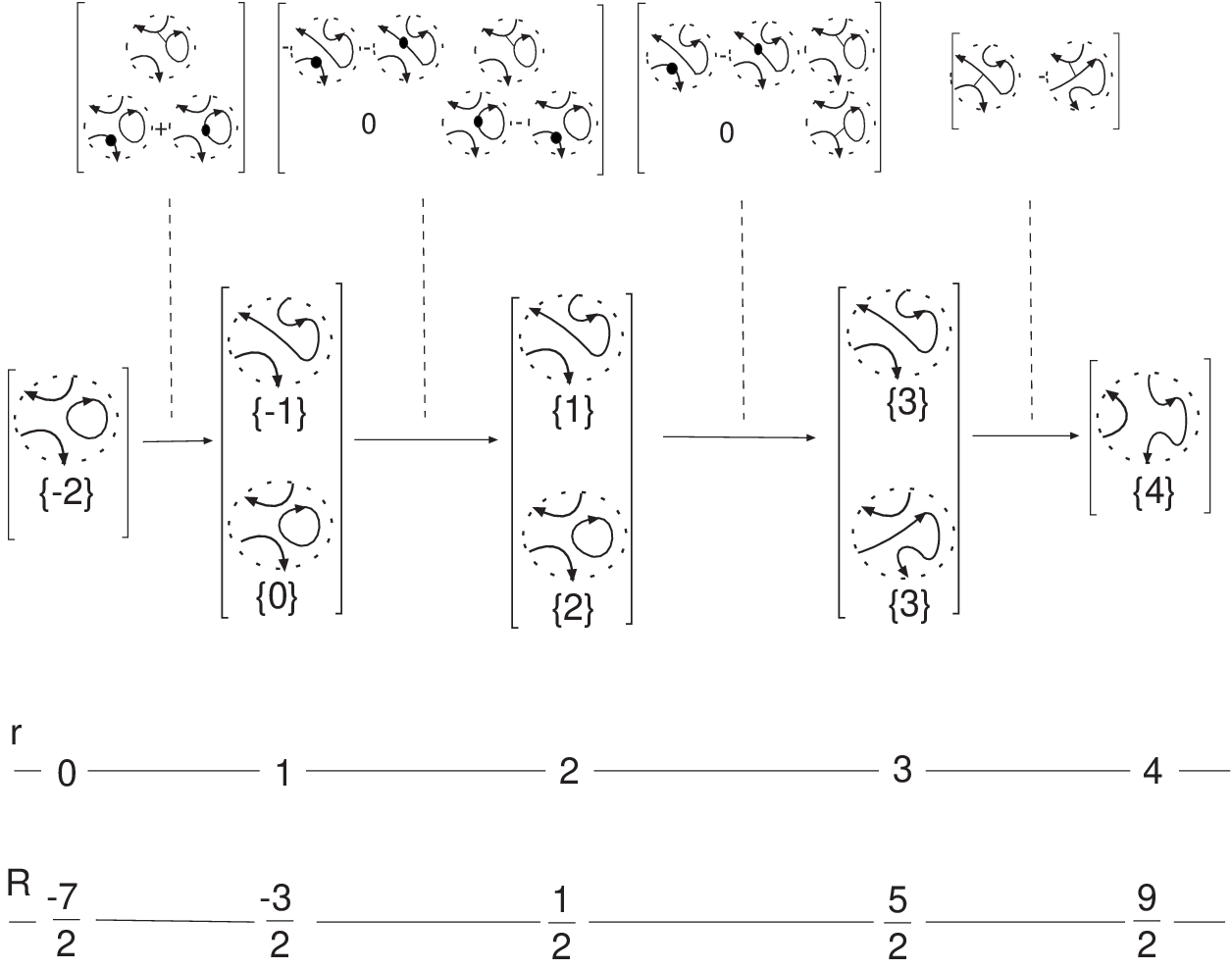}
\caption{A diagonal complex inserted in a negative unary basic diagram $U$.
}\label{Fig:DiagonalWithLoop}
\end{figure}
which is also a diagonal complex, but with a loop in some of its smoothings. Observe that the rotation number of the smoothings have decreased in $\frac{1}{2}$ after having been inserted in a negative unary basic diagram.\\
After applying delooping and gaussian elimination, we obtain the complex in Figure \ref{Fig:2Diagonal} which is also a diagonal complex, but now with rotation constant $\frac{7}{2}$.
\begin{figure}[hbt] \centering
\includegraphics[scale=0.6]%
{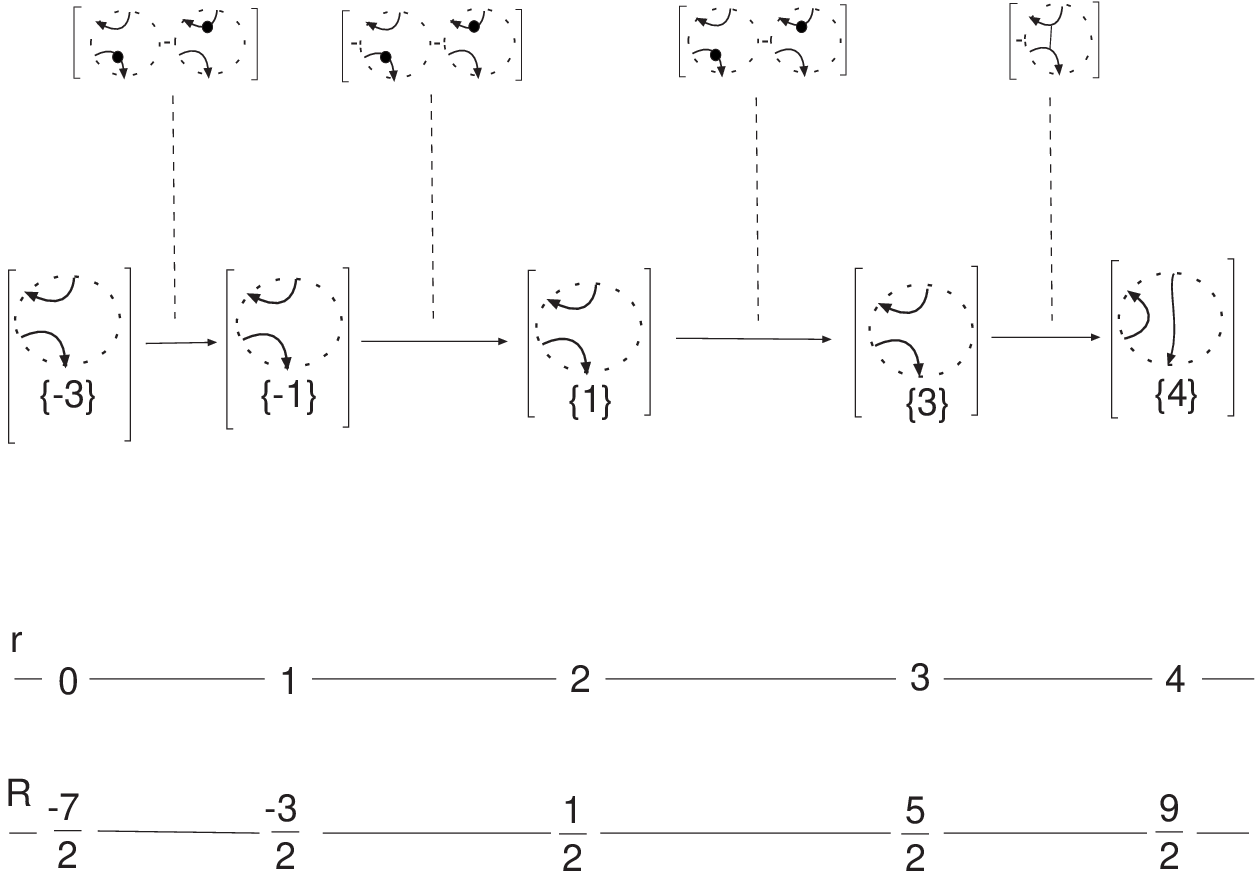}
\caption{A partial closure of a coherently diagonal complex is also a diagonal complex.
}\label{Fig:2Diagonal}
\end{figure}


\subsection{Applying binary operators}
\begin{proposition}\label{prop:ComBinaryOperator}
If $D$ is an appropriate binary basic operator and $(\Psi,e),(\Phi,f)$ are diagonal complexes in  $\Kobo$ with rotation constants $C_{\Psi}$ and $C_{\Phi}$ respectively, then $D(\Psi,\Phi)$ is a diagonal complex with rotation constant $C_{\Psi}+C_{\Phi}$.
\end{proposition}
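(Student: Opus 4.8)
The plan is to read off the objects of $D(\Psi,\Phi)$ from the planar-algebra formula (\ref{eq:KobPA}) and then verify the defining diagonal equation one summand at a time. First I would record that, by the first line of (\ref{eq:KobPA}), every object of $\Omega^r=D(\Psi,\Phi)^r$ is a direct summand of some $D(\Psi^s,\Phi^t)$ with $s+t=r$, and hence is of the form $D(\psi\{a\},\phi\{b\})$ for a shifted smoothing $\psi\{a\}$ appearing in $\Psi^s$ and a shifted smoothing $\phi\{b\}$ appearing in $\Phi^t$.

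The key step is to establish that the degree-shifted rotation number is additive under $D$, namely $\overline{R}(D(\psi\{a\},\phi\{b\}))=\overline{R}(\psi\{a\})+\overline{R}(\phi\{b\})$. This splits into two separate additivities. For the rotation numbers themselves, Proposition \ref{prop:AsoRotNumber} gives $R(D(\psi,\phi))=R_D+R(\psi)+R(\phi)$, and since $D$ is a binary basic operator, Proposition \ref{prop:RotNumberBasic} tells us $R_D=0$, so $R(D(\psi,\phi))=R(\psi)+R(\phi)$. For the grading shifts, the planar operation $D$ merely glues the two smoothings along the single interconnecting arc, so it carries no intrinsic degree shift and the shifts add: $D(\psi\{a\},\phi\{b\})=D(\psi,\phi)\{a+b\}$. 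Combining these yields
\[
\overline{R}\big(D(\psi\{a\},\phi\{b\})\big)=R(\psi)+R(\phi)+a+b=\overline{R}(\psi\{a\})+\overline{R}(\phi\{b\}).
\]

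Now I would invoke the diagonal hypotheses. Since $\psi\{a\}$ lies in $\Psi^s$ and $\Psi$ is diagonal with constant $C_{\Psi}$, we have $\overline{R}(\psi\{a\})=2s-C_{\Psi}$; likewise $\overline{R}(\phi\{b\})=2t-C_{\Phi}$. Substituting into the additivity relation and using $r=s+t$ gives
\[
2r-\overline{R}\big(D(\psi\{a\},\phi\{b\})\big)=2(s+t)-\big((2s-C_{\Psi})+(2t-C_{\Phi})\big)=C_{\Psi}+C_{\Phi},
\]
which is independent of the chosen summand. This is exactly the diagonal condition, so $D(\Psi,\Phi)$ is diagonal with rotation constant $C_{\Psi}+C_{\Phi}$.

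The one point deserving care—and the main obstacle—is the claim that $D$ adds the grading shifts and sends a degree-preserving differential to a degree-preserving differential, which is what lets us treat each summand independently. I would settle this by appealing to the graded planar-algebra structure of $\Kobo$ in the sense of \cite[Section 6]{Bar1}: the planar operators attached to arc diagrams without crossings are homogeneous of degree $0$, so they act additively on the $\{q\}$-shifts and commute with the degree-zero differentials $e$ and $f$ assembled in (\ref{eq:KobPA}). Once this bookkeeping is in place, the chain-complex and degree-preservation requirements are inherited from $\Psi$ and $\Phi$, and the diagonal condition for $D(\Psi,\Phi)$ follows immediately from the two additivities above.
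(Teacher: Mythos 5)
Your proposal is correct and takes essentially the same route as the paper's proof: identify the summands of $D(\Psi,\Phi)^r$ via (\ref{eq:KobPA}) as $D(\psi,\phi)\{q_{\psi}+q_{\phi}\}$, use additivity of rotation numbers together with the vanishing of the associated rotation number of a binary basic operator, and substitute the two diagonal hypotheses to get $2r-\overline{R}=C_{\Psi}+C_{\Phi}$. The only cosmetic difference is that the paper cites the corresponding additivity results from \cite{Bur} where you cite Propositions \ref{prop:AsoRotNumber} and \ref{prop:RotNumberBasic}; your final bookkeeping is in fact stated more cleanly than the paper's.
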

\begin{proof} Inserting $(\Psi,e)$ and $(\Phi,f)$ in the disc $D$ produces the complex $(\Omega,d)= D(\Psi,\Phi)$, which
by equation (\ref{eq:KobPA}) satisfies
\begin{equation}\label{eq:objbasic2}\Omega^r=
\bigoplus_{r=s+t}D(\Psi^{s},\Phi^{t})\end{equation}
and
\begin{equation}\label{eq:diffbasic2}d_{|D(\Psi^{s},\Phi^{t})}=
D(e,I_{\Phi^{t}})+(-1)^{s}D(I_{\Psi^{s}},f)\end{equation}

If $\psi\{q_{\psi}\}$ and $\phi\{q_{\phi}\}$ are
respectively elements in the vectors $\Psi^s$ and $\Phi^t$, so
by equation (\ref{eq:objbasic2}) the elements in the vector
$\Omega^r$ are of the form $D(\psi,\phi)\{q_{\psi}+q_{\phi}
\}$. As $\psi$ and $\phi$ are smoothings with no loops, the same
we have for $D(\psi,\phi)$ and by using propositions 3.7 and 3.10 in \cite{Bur}, we
obtain
\[R(D(\psi,\phi))+q_{\psi}+q_{\phi}=R(\psi)+R(\phi)+q_{\psi}+q_{\phi}.\]
Therefore, the homological degree $r$ is given by $s-(R(\psi)+q_{\psi})+t-(R(\phi)+q_{\phi})=C_{\Psi}+C_{\Phi}$ \qed
\end{proof}

\begin{proposition}\label{prop:ComplexUnaryToBinary} Let $(\Psi,e)$ and $(\Phi,e)$ complex in $\calD$ with rotation constant $C_{\Psi}$ and $C_{\Phi}$ respectively, and let $D$ be a binary basic planar operator in which $D(\Psi,\Phi)$ is well defined. For each partial closure $C(D(\Psi,\Phi))$, there exists an operator $D'$ defined on a diagram without curls and chain complexes $\Psi',\Phi'$ in $\calD$ such that \[C(D(\Psi,\Phi))=D'(\Psi',\Phi')\]. \end{proposition}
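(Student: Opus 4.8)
The plan is to combine the closure $C$ and the binary diagram $D$ into a single two-input planar diagram and then to show that every curl appearing in it can be ``pushed'' into one of the two input discs, where it becomes part of a partial closure of $\Psi$ or of $\Phi$, while every arc that joins the two input discs is recorded in the curl-free diagram $D'$. Concretely, I would argue by induction on the length $l$ of the partial closure $C=D_l\circ\cdots\circ D_1$, carrying the inductive hypothesis that $C''(D(\Psi,\Phi))=D''(\Psi'',\Phi'')$ with $D''$ free of curls and with $\Psi'',\Phi''\in\calD$ partial closures of $\Psi$ and of $\Phi$.

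For the base case $l=0$ I take $D'=D$ (absorbing at once any curls already present in the basic diagram $D$ into $\Psi$ or $\Phi$ exactly as in the inductive step below), together with $\Psi'=\Psi$ and $\Phi'=\Phi$. For the inductive step I write $C=U\circ C''$ with $U$ a single unary basic operator, so that $C(D(\Psi,\Phi))=U\big(D''(\Psi'',\Phi'')\big)$ by the inductive hypothesis. The operator $U$ attaches a curl closing two \emph{adjacent} boundary points $p,q$ of the output disc. Since $D''$ has no curls, every output point is joined by a boundary arc to a boundary point of an input disc; let $p',q'$ be the input points reached from $p$ and $q$. I then split into three cases according to the discs on which $p'$ and $q'$ lie.

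If $p',q'$ both lie on the input disc $\Delta_{\Psi''}$ carrying $\Psi''$, the curl of $U$ together with the two boundary arcs of $D''$ forms a single arc with both feet on that disc; because $p$ and $q$ are adjacent in the output, this arc bounds, with a segment of $\partial\Delta_{\Psi''}$, a region containing no other input disc, so it can be absorbed by enlarging $\Delta_{\Psi''}$. The enlargement defines a unary operator $V$ on $\Psi''$, which by Proposition \ref{prop:compplanar} is a finite composition of unary basic operators, hence a partial closure; I set $\Psi'=V(\Psi'')$, $\Phi'=\Phi''$, and let $D'$ be $D''$ with the two boundary arcs removed. The case $p',q'\in\Delta_{\Phi''}$ is symmetric, with the roles of the two discs exchanged. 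If instead $p'\in\Delta_{\Psi''}$ and $q'\in\Delta_{\Phi''}$ (or the reverse), the same composite arc now joins the two input discs, i.e. it is an interconnecting arc; I keep $\Psi'=\Psi''$, $\Phi'=\Phi''$ and let $D'$ be $D''$ with this interconnecting arc adjoined. In every case $D'$ acquires no new curl, the equality $C(D(\Psi,\Phi))=D'(\Psi',\Phi')$ holds by the associativity of the planar-algebra operation ($U\circ D''=D'\circ V$ as planar diagrams), and $\Psi',\Phi'$ remain partial closures of $\Psi$ and $\Phi$. Finally, since coherent diagonality means that \emph{all} partial closures are diagonal and partial closures compose, every partial closure of $\Psi$ (resp. of $\Phi$) is again coherently diagonal, so $\Psi',\Phi'\in\calD$, which closes the induction.

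The step I expect to be the main obstacle is the geometric absorption in the same-disc case: verifying that the composite arc really bounds a region free of the other input disc (and of the interconnecting arc), so that the enlargement of the disc is a legitimate planar operation and $D'$ stays curl-free. The adjacency of $p$ and $q$ guaranteed by $U$ being a \emph{basic} unary operator is precisely what prevents the composite arc from encircling $\Delta_{\Phi''}$, and this is the point that must be argued with care; a secondary, purely bookkeeping, issue is to check that the sign conventions in (\ref{eq:KobPA}) are respected when the planar composition $U\circ D''$ is reassociated as $D'\circ V$, so that the displayed identity is an honest equality of complexes rather than merely a homotopy equivalence.
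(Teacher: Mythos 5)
Your overall strategy---writing $C\circ D$ as a single two-input diagram and pushing each closure curl either into one of the input discs (where it becomes a partial closure) or into the curl-free skeleton $D'$ (where it becomes an interconnecting arc)---is surely the intended argument; the paper itself offers no proof here beyond a pointer to Proposition 4.7 of \cite{Bur}, so your write-up is the only detailed argument on the table. However, the step you yourself flag as the main obstacle does fail in general, and the reason you give for it to succeed (adjacency of $p$ and $q$) does not suffice. Adjacency of $p,q$ on the external circle only guarantees that no \emph{boundary arc} enters the pocket through the short external arc between them; it does not prevent the second input disc from lying inside the pocket. Concretely, let $\Phi$ have two legs (so $D$ has one interconnecting arc and one boundary arc from $\Delta_\Phi$) and let $\Psi$ have six legs. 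A first unary closure can join the single boundary arc of $\Delta_\Phi$ to an adjacent boundary arc of $\Delta_\Psi$, after which $\Delta_{\Phi''}$ has \emph{no} boundary arcs and is joined to $\Delta_{\Psi''}$ by two interconnecting arcs ending on consecutive legs of $\Delta_{\Psi''}$. A second unary closure may then join the two boundary arcs of $\Delta_{\Psi''}$ that flank those legs (their external endpoints are adjacent ``the back way''), and the resulting composite arc $\gamma$, together with the segment $s$ of $\partial\Delta_{\Psi''}$ carrying the interconnecting arcs, bounds a pocket that contains $\Delta_{\Phi''}$. Since only two of the three allowed closures have been used, this is a legitimate partial closure. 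In this configuration the absorption is impossible in principle, not just hard to see: any disc containing $\Delta_{\Psi''}\cup\gamma$ necessarily contains the bounded complementary component of $\Delta_{\Psi''}\cup\gamma$, hence contains $\Delta_{\Phi''}$, so no enlargement of the first input disc can swallow $\gamma$ while remaining disjoint from the second input disc, and $\gamma$ remains a curl of whatever two-input diagram you extract. Your induction therefore breaks at this case, and the conclusion would have to be rescued by choosing $\Psi',\Phi'$ that are \emph{not} partial closures of $\Psi,\Phi$ (the proposition permits this, since it only asks that $\Psi',\Phi'\in\calD$), e.g.\ by re-cutting the diagram along the pocket boundary; but that requires a new idea that your proof does not supply.

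Two smaller points. First, when both feet of $\gamma$ do land on the same disc with an empty pocket, you should also rule out other strings crossing the pocket; this is easy (such a string would have to be a curl of the curl-free $D''$ or a closed loop), but say it, since it is what makes the enlargement legitimate. Second, \emph{Proposition \ref{prop:compplanar}} decomposes an arbitrary operator into basic operators, not specifically into \emph{unary} ones; to conclude that your $V$ yields a partial closure you need the (true, but separate) fact that a $1$-input alternating diagram decomposes into unary basic operators by closing innermost curls first. The sign-bookkeeping issue you mention at the end is indeed only bookkeeping and is handled by equations (\ref{eq:KobPA}).
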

\begin{proof} The  proof similar to the proof of proposition 4.7 in \cite{Bur} \qed
\end{proof}
\begin{proposition}\label{prop:ComplexSmoothingDiag}Let $\sigma$ and $\tau$ be smoothings, and let $D$ be a suitable binary planar operator defined from a no-curl planar arc diagram with output disc $D_0$, input discs $D_1,D_2$, associated rotation constant $R_D$ and with at least one boundary arc ending in $D_1$, then there exists a closure operator $C$ and a unary operator $D'$ defined from a no-curl planar arc diagram such that $D(\sigma, \tau)=D'(C(\sigma))$. Moreover, if $(\Omega,d) \in \calD$ has rotation constant $C_{\Omega}$, then $D(\Omega,\tau)$ is a diagonal complex with rotation constant $C_{\Omega}-R(\tau)-R_D$. If  $\left[\sigma_j\right]_j$ is a vector in which each smoothing $sigma_j$ has the same rotation number $R$, then $D(\Omega',\left[\sigma_j\right]_j)$ with rotation constant $C_{\Omega}-R-R_D$
\end{proposition}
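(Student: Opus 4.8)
The plan is to separate the two assertions. The factorization $D(\sigma,\tau)=D'(C(\sigma))$ is the geometric heart of the statement, and I would establish it first. The key observation is that fixing the smoothing $\tau$ in the input disc $D_2$ turns $D(\cdot,\tau)$ into a \emph{unary} operation $\tilde D$ acting on the slot $D_1$. Its underlying one-input diagram is read off by tracing, for each boundary point $p$ of $D_1$, the arc of $D$ attached to it: a boundary arc carries $p$ directly to the output $D_0$; an interconnecting arc carries $p$ into $D_2$, where the strands of $\tau$ route it either back to a second boundary point $p'$ of $D_1$ (producing an arc with both ends on $D_1$, i.e.\ a curl) or onward to $D_0$, while the loops of $\tau$ and any cycles closed off by interconnecting arcs contribute fixed closed loops. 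Thus $\tilde D$ is a unary operator whose diagram carries curls, and I would invoke Proposition~\ref{prop:compplanar} to write $\tilde D$ as a composition of basic operators, reordering this composition so that the curl-closing unary basic factors come first. Collecting them yields a closure operator $C$ (one unary basic operator per curl, joining the corresponding pair of boundary points of $\sigma$), while the curl-free remainder is a no-curl unary operator $D'$. The hypothesis that at least one boundary arc ends in $D_1$ is precisely what guarantees that not every boundary point of $\sigma$ is closed off; since the number of boundary points is even, at least two survive, so $C$ is a genuine \emph{partial} closure ($l<k$) and $D'$ is a well-defined no-curl unary operator. This gives $D(\sigma,\tau)=D'(C(\sigma))$.

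For the ``moreover'' part I would argue smoothing-by-smoothing, exactly as in the proof of Proposition~\ref{prop:ComBinaryOperator}. Regarding $\tau$ as a single smoothing concentrated in homological degree $0$, equation~(\ref{eq:KobPA}) identifies the degree-$r$ term of $D(\Omega,\tau)$ with $D(\Omega^r,\tau)$; hence each smoothing of $D(\Omega,\tau)$ has the form $D(\sigma,\tau)\{q_\sigma\}$ with $\sigma\{q_\sigma\}$ a smoothing of $\Omega^r$, and its homological degree is again $r$. Applying the rotation-number formula~(\ref{eq:AsocRotNumber}) of Proposition~\ref{prop:AsoRotNumber} gives
\[\overline{R}\bigl(D(\sigma,\tau)\{q_\sigma\}\bigr)=R_D+R(\sigma)+R(\tau)+q_\sigma,\]
so that
\[2r-\overline{R}\bigl(D(\sigma,\tau)\{q_\sigma\}\bigr)=\bigl(2r-\overline{R}(\sigma\{q_\sigma\})\bigr)-R_D-R(\tau)=C_{\Omega}-R(\tau)-R_D.\]
As this value is independent of the chosen smoothing, $D(\Omega,\tau)$ is diagonal with rotation constant $C_{\Omega}-R(\tau)-R_D$, as claimed. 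Equivalently, one may feed the factorization $D(\Omega,\tau)=D'(C(\Omega))$ into the coherent diagonality of $\Omega$: the partial closure $C$ shifts the rotation constant by $-R_C$ and the no-curl unary $D'$ by $-R_{D'}$, and additivity of associated rotation numbers under composition forces $R_C+R_{D'}=R_D+R(\tau)$.

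I expect the main obstacle to be the first part, namely making the diagrammatic factorization rigorous. One must check that the reordering of basic operators furnished by Proposition~\ref{prop:compplanar} can always be arranged with the closures first, that the resulting $C$ respects the alternating orientation so that it is a legitimate partial-closure operator, and that the fixed loops contributed by $\tau$ are correctly accounted for (they may be absorbed into $D'$). The rotation-constant computation, by contrast, is routine once~(\ref{eq:AsocRotNumber}) is in hand; the only points to verify are that no sign or homological-degree shift is introduced by inserting the degree-$0$ smoothing $\tau$, which is immediate from~(\ref{eq:KobPA}).
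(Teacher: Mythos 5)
Your proposal is correct and follows essentially the same route as the paper: the ``moreover'' part is verified by exactly the same smoothing-by-smoothing computation, $2r-\overline{R}\bigl(D(\sigma,\tau)\{q_\sigma\}\bigr)=2r-R_D-\overline{R}(\sigma\{q_\sigma\})-R(\tau)=C_{\Omega}-R(\tau)-R_D$, via Proposition~\ref{prop:AsoRotNumber}. For the factorization $D(\sigma,\tau)=D'(C(\sigma))$ the paper gives no argument at all beyond deferring to the proof of Proposition 4.8 of \cite{Bur}, so your explicit arc-tracing construction of the curls, the closure operator $C$, and the no-curl remainder $D'$ supplies more detail than the paper itself and is consistent with what that reference is invoked for.
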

\begin{proof} The prove that  there exists a closure operator $C$ and a unary operator $D'$ defined from a no-curl planar arc diagram such that $D(\sigma, \tau)=D'(C(\sigma))$, we reason as in the proof of proposition 4.8 in \cite{Bur}. To prove that the rotation constant of $D(\Omega,\tau)$ is $C_{\Omega}-R(\tau)-R_D$, we observe that for each smoothing $\sigma\{q_{\sigma}\}$ in $\Omega$ the shifted rotation number satisfies $R(D(\sigma\{q_{\sigma}\,\tau))= R_D+R(\sigma\{q_{\sigma}\}+R(\tau)= R_D+2r -C_{\Omega}+R(\tau)$. Therefore, $2r-R(D(\sigma\{q_{\sigma}\,\tau))=C_{\Omega}-R(\tau)-R_D$.\\
\indent  The complex $D(\Omega',\left[\sigma_j\right]_j)$ is the direct sum $\oplus_j \left[ D\left(\Omega', \sigma_j\right)\right]$. Thus, the last part of the proposition follows from the observation that each of its direct summands $D\left(\Omega', \sigma_j\right)$ is a coherently diagonal complex with rotation constant $C-R-R_D$. \qed
\end{proof}


\section{Proof of Main Theorem}\label{sec:Theorem1} We are ready to prove our main result

\begin{proof}(Of  \textbf{Main Theorem})
Assume that $(\Phi,f)$ is bounded in $[t_0,t_M]$. Let $t_M-t_0=N-1$, we apply induction on $N$. For the case N=1, the result is obvious by proposition \ref{prop:ComplexSmoothingDiag}.\\
 \indent Assume that the statement is valid for any diagonal complex with numeration $g:S\longrightarrow \{1,...,N-1\}$.   $(\Omega,d)=D(\Psi,\Phi)$, let $(\Phi',f')$ be the complex resulting from eliminating  in $(\Phi,f)$, the last smoothing $\phi_N$ and every cobordism that have $\phi_N$ as the image. It will be easy for the reader to prove that $(\Phi',f')$ is in fact a chain complex. By proposition \ref{lem:TriangulOfBlocks} (also observe Remark \ref{re:ComplexSmoothingComposition}), we have that the complex $D(\Psi,\Phi)$ is formed by segments of the form

 \begin{equation}\label{eq:InductionTriangularBlockSegment}
  \xymatrix@C=3cm{
    \cdots\
    {\begin{bmatrix}\Omega_{(1)}^{r-1} \\ \Omega_N^{s-1}\end{bmatrix}}
    \ar[r]^{\begin{pmatrix}
      d_{(1)}^{r-1} & 0 \\ \rho_{(1)}^{r-1} & d_N^{s-1}
    \end{pmatrix}} &
    {\begin{bmatrix}\Omega_{(1)}^r \\ \Omega_N^s\end{bmatrix}}
    \ar[r]^{\begin{pmatrix}
      d_{(1)}^r & 0 \\ \rho_{(1)}^r & d_N^s
    \end{pmatrix}} &
    {\begin{bmatrix}\Omega_{(1)}^{r+1} \\ \Omega_N^{s+1}\end{bmatrix}}
    \ar[r]^{\begin{pmatrix}
      d_{(1)}^{r+1} & 0 \\ \rho_{(1)}^{r+1} & d_N^{s+1}
    \end{pmatrix}} &
    {\begin{bmatrix}\Omega_{(1)}^{r+2} \\ \Omega_N^{s+2}\end{bmatrix}} \  \cdots
  }
\end{equation}
Here, $(\Omega_{(1)},d_{(1)})=D(\Psi,\Phi')$, $d_N^s=D(e^s,I_{\Phi_N})$, $s=r-t_M$, and $\rho^r=(-1)^sD(I_{\Psi^s},f_{N-1})$, where $f_{N-1}$ is the component of $f$ that has $\phi_N$ as the image. A $0$ here actually represents a vector of zero morphisms\\
\indent By the induction hypothesis, $D(\Phi,\Psi')$ is a coherently alternating complex, so it is possible to carry out delooping and gauss eliminations in  $D(\Phi,\Psi')$ and obtain a reduced diagonal complex $(\Omega_{(0)},d_{(0)})$ which is a diagonal complex with rotation constant $C_{\Psi}+C_{\Phi}-R_D$. Furthermore, anytime before applying Gaussian elimination we have had complex segments of the form:

 \begin{equation}\label{eq:TriangularBlockSegment}
  \xymatrix@C=3cm{
    \cdots\
    {\begin{bmatrix}C \\ \Omega_N^{s-1}\end{bmatrix}}
    \ar[r]^{\begin{pmatrix}
      \alpha & 0 \\ \beta & 0 \\ \rho_1 & d_N^{s-1}
    \end{pmatrix}} &
    {\begin{bmatrix}b_1\\ D \\ \Omega_N^s\end{bmatrix}}
    \ar[r]^{\begin{pmatrix}
     \phi & \delta & 0 \\  \gamma & \epsilon & 0 \\ \rho_2 & \rho_3 & d_N^s
    \end{pmatrix}} &
    {\begin{bmatrix}b_2 \\ E\\  \Omega_N^{s+1}\end{bmatrix}}
    \ar[r]^{\begin{pmatrix}
      \mu & \nu & 0 \\ \rho_4 & \rho_5 & d_N^{s+1}
    \end{pmatrix}} &
    {\begin{bmatrix}D \\ \Omega_N^{s+2}\end{bmatrix}} \  \cdots
  }
\end{equation}

where $\rho_1,\rho_2,\rho_3,\rho_4$ and $\rho_5$ denote matrices of appropriate dimensions that have been obtained in intermediate steps of the process in the places where the $\rho_{(1)}^{r}$ were located at the beginning. After Applying Gauss elimination the resulting four term complex segment is:
 \begin{equation}\label{eq:TriangularBlockSegment1}
  \xymatrix@C=3cm{
    \cdots\
    {\begin{bmatrix}C \\ \Omega_N^{s-1}\end{bmatrix}}
    \ar[r]^{\begin{pmatrix}
       \beta & 0 \\ \rho_1 & d_N^{s-1}
    \end{pmatrix}} &
    {\begin{bmatrix} D \\ \Omega_N^s\end{bmatrix}}
    \ar[r]^{\begin{pmatrix}
 \epsilon - \gamma\phi^{-1}\delta & 0 \\  \rho_3 - \rho_2\phi^{-1}\delta& d_N^s
    \end{pmatrix}} &
    {\begin{bmatrix}E \\ \Omega_N^{s+1}\end{bmatrix}}
    \ar[r]^{\begin{pmatrix}
     & \nu & 0 \\ & \rho_5 & d_N^{s+1}
    \end{pmatrix}} &
    {\begin{bmatrix}D \\ \Omega_N^{s+2}\end{bmatrix}} \  \cdots
  }
\end{equation}
Thus, by applying delooping and gaussian elimination we do not change the configuration of the right lower block in the matrices of equation (\ref{eq:InductionTriangularBlockSegment}). Thus, the complex $D(\Phi,\Psi)$ is homotopy equivalent to a complex with segments

\begin{equation}\label{eq:InductionTriangularBlockSegment2}
 \xymatrix@C=3cm{
    \cdots\
    {\begin{bmatrix}\Omega_{(0)}^{r-1} \\ \Omega_N^{s-1}\end{bmatrix}}
    \ar[r]^{\begin{pmatrix}
      d_{(0)}^{r-1} & 0 \\ \rho_{(0)}^{r-1} & d_N^{s-1}
    \end{pmatrix}} &
    {\begin{bmatrix}\Omega_{(0)}^r \\ \Omega_N^s\end{bmatrix}}
    \ar[r]^{\begin{pmatrix}
      d_{(0)}^r & 0 \\ \rho_{(0)}^r & d_N^s
    \end{pmatrix}} &
    {\begin{bmatrix}\Omega_{(0)}^{r+1} \\ \Omega_N^{s+1}\end{bmatrix}}
    \ar[r]^{\begin{pmatrix}
      d_{(0)}^{r+1} & 0 \\ \rho_{(0)}^{r+1} & d_N^{s+1}
    \end{pmatrix}} &
    {\begin{bmatrix}\Omega_{(0)}^{r+2} \\ \Omega_N^{s+2}\end{bmatrix}} \  \cdots
  }
\end{equation}

that  have loops only in the column blocks ${\Omega_N^s}$. Moreover, this complex is homotopy equivalent to the complex \\
\begin{equation}\label{eq:InductionTriangularBlockSegment3}
 \xymatrix@C=3cm{
    \cdots\
    {\begin{bmatrix} \Omega_N^{s-1}\\ \Omega_{(0)}^{r-1} \end{bmatrix}}
    \ar[r]^{\begin{pmatrix}
      d_N^{s-1} &  \rho_{(0)}^{r-1} \\ 0 & d_{(0)}^{r-1}
    \end{pmatrix}} &
    {\begin{bmatrix} \Omega_N^s \\ \Omega_{(0)}^r \end{bmatrix}}
    \ar[r]^{\begin{pmatrix}
   d_N^s   & \rho_{(0)}^r  \\ 0 &  d_{(0)}^r
    \end{pmatrix}} &
    {\begin{bmatrix}  \Omega_N^{s+1}\\ \Omega_{(0)}^{r+1}\end{bmatrix}}
    \ar[r]^{\begin{pmatrix}
     d_N^{s+1} & \rho_{(0)}^{r+1} \\ 0 &  d_{(0)}^{r+1}
    \end{pmatrix}} &
    {\begin{bmatrix}\Omega_{(0)}^{r+2} \\ \Omega_N^{s+2}\end{bmatrix}} \  \cdots
  }
\end{equation}
It is not difficult to show that  by applying delooping and Gaussian elimination we do not change either the configuration of the right lower block in the matrices of equation (\ref{eq:InductionTriangularBlockSegment3}). Furthermore, according to proposition \ref{prop:ComplexSmoothingDiag}  the chain complex 
\[D(\Psi,\phi_N)= \xymatrix@C=1cm{
    \cdots\
     {\Omega_N^{s-1}}
    \ar[r]^{d_N^{s-1}}
     &
     {\Omega_N^s}
    \ar[r]^{d_N^s} &
    {\Omega_N^{s+1}}
     \  \cdots
  }\]
   is a coherently diagonal complex with rotation constant $C_{\Psi}-\overline{R}(\phi_N)-R_D$, then for each homological degree $s$ of $(\Psi,e)$, and each smoothing $\psi$ in $\Psi^s$, we have that $2s-R(D(\psi,\phi_N))=C_{\Psi}-R(\phi_N)-R_D$. Adding $2t_M$ to each side of this last equation we obtain $2r-R(D(\psi,\phi_N))=C_{\Psi}+C_{\Phi}-R_D$. That proves that we can obtain a reduced diagonal complex from $(\Omega,d)$.
\qed
\end{proof}

\bigskip


\begin{thebibliography}{99}

\small


\bibitem[Bur]{Bur} {\sc H. Burgos Soto}, \emph{The Jones polynomial and the planar algebra of alternating links},
 arXiv:math.GT/0807.2600v1 (2008)

\bibitem[Jo]{Jo} {\sc V. Jones}, \emph{Planar Algebras, I},  New Zealand Journal of Mathematics. QA/9909027

\bibitem[Kh]{Khov1} {\sc M. Khovanov}, \emph{A categorification of the Jones
Polynomial},  Duke Math J., \textbf{101(3)} (1999), 359--426.

\bibitem[Th]{Thi} {\sc M. Thistlethwaite}, \emph{Spanning tree expansion of the \uppercase{J}ones polynomial },  Topology, \textbf{26} (1987), 297--309.

\bibitem[Wei]{Wei} {\sc C. A. Weibel}, \emph{An Introduction to Homological Algebra},
Cambridge Studies in Advanced Mathematics 38, Cambridge University Press, Cambridge, UK.,  (1994).

\end{thebibliography}
\end{document}